\setlist{itemsep=2pt}
\setlist{parsep=3pt}
\newcommand{\F}{\mathbb{F}}
\newcommand{\qbinom}[3]{\genfrac{[}{]}{0pt}{}{#1}{#2}_{#3}}
\newcommand{\cB}{\mathcal{B}}
\newcommand{\cP}{\mathcal{P}}
\newcommand{\des}{\mathcal{D}}
\DeclareMathOperator{\diag}{diag}
\theoremstyle{plain}
\newtheorem{theorem}{Theorem}[section]
\newtheorem{lemma}[theorem]{Lemma}
\newtheorem{remark}[theorem]{Remark}
\newtheorem{example}[theorem]{Example}
\theoremstyle{nonumberplain}
\newtheorem{proof}{Proof}
\newcommand{\varleft}{x}
\newcommand{\Varleft}{X}
\newcommand{\varmid}{y}
\newcommand{\Varmid}{Y}
\newcommand{\varright}{z}
\newcommand{\Varright}{Z}
\begin{document}
\title{Higher incidence matrices and\\tactical decomposition matrices}
\author{
Michael Kiermaier%
\thanks{
University of Bayreuth, Institute for Mathematics, 95440 Bayreuth, Germany
\newline
email:~\texttt{michael.kiermaier@uni-bayreuth.de}
\newline
homepage:~\url{https://mathe2.uni-bayreuth.de/michaelk/}
}
\and
Alfred Wassermann%
\thanks{
University of Bayreuth, Institute for Mathematics, 95440 Bayreuth, Germany
\newline
email:~\texttt{alfred.wassermann@uni-bayreuth.de}
\newline
homepage:~\url{https://www.dmi.uni-bayreuth.de/de/team/wassermann/}
}
}
\dedication{\emph{In memory of Professor Zvonimir Janko}}
\setkomafont{dedication}{\normalsize}
\maketitle

\begin{abstract}
    In 1985, Janko and Tran Van Trung published an algorithm for constructing symmetric designs with prescribed automorphisms.
    This algorithm is based on the equations by Dembowski (1958) for tactical decompositions of point-block incidence matrices.
    In the sequel, the algorithm has been generalized and improved in many articles.

    In parallel, higher incidence matrices have been introduced by Wilson in 1982.
    They have proven useful for obtaining several restrictions on the existence of designs.
    For example, a short proof of the generalized Fisher's inequality makes use of these incidence matrices.

    In this paper, we introduce a unified approach to tactical decompositions and incidence matrices.
    It works for both combinatorial and subspace designs alike.
    As a result, we obtain a generalized Fisher's inequality for tactical decompositions of combinatorial and subspace designs.
    Moreover, our approach is explored for the construction of combinatorial and subspace designs of arbitrary strength.
\end{abstract}

\section{Introduction}
A combinatorial $t$-$(v,k,\lambda)$ design $(V, \des)$ is a set $V$ consisting of $v$ \emph{points} together
with a set $\des$ of
$k$-subsets of $V$ called \emph{blocks} such that each $t$-subset of $V$ is contained in exactly
$\lambda$ blocks. In this paper we assume basic familiarity with $t$-designs as given
in \cite{Beth-Jungnickel-Lenz-1999-DesignTheoryI,Beth-Jungnickel-Lenz-1999-DesignTheoryII} or \cite{handbook}. The $q$-analogs of combinatorial designs are called subspace designs.
For an introduction to subspace designs and a discussion of the mechanism of combinatorial $q$-analogs,
the reader is referred to \cite{Braun-Kiermaier-Wassermann-2018-SignalsCommunTechnol:171-211}.

A fundamental result in design theory is Fisher's inequality, stating that $\#\des \geq \#V$ for any non-empty design with $t\geq 2$.
An elegant proof was given in \cite{Bose1949} involving the point-block incidence matrix of the design.
This approach has been generalized in several directions:
Based on tactical decompositions, Block \cite{Block1967} and independently Kantor \cite{Kantor1969} proved a generalization of Fisher's inequality for the number of point and block orbits under a group action.
Ray-Chaudhuri and Wilson \cite{RayChaudhuri-Wilson-1975-OsakaJM12[3]:737-744} used incidence matrices of $s$-subsets versus blocks for $s\geq 1$.
Cameron \cite{Cameron-1974} studied the same question for subspace designs.
As we will see, our article unifies all these generalizations of the method of Bose.

The use of \emph{tactical decompositions} in design theory has been initiated by Dembowski \cite{Dembowski1958},
see also \cite{Dembowski1968} and Beutelspacher \cite[pp. 210--220]{Beutelspacher1982}.
Dembowski's main interest was to use tactical decompositions to study properties of
symmetric designs. From an algorithmic point of view, tactical decompositions were first used
by Janko and Tran Van Trung \cite{Janko1985} to construct symmetric $(78, 22, 6)$ designs.
Their method was picked up and generalized in numerous papers, see
\cite{Cepulic1994,Crnkovic2005,Krcadinac2011} to name just a few. In \cite{Nakic-Pavcevic-2015}
the use of tactical decompositions has been generalized to subspace designs.

The general approach outlined by Janko and Tran Van Trung is to first enumerate all tactical
decomposition matrices of designs with prescribed automorphisms up to permutations of rows and columns.
For this, Dembowski \cite{Dembowski1958} has given powerful constraints for a matrix to be
a tactical decomposition of the point-block incidence matrix of a $2$-design.
In a second step, all remaining tactical decomposition matrices are
expanded -- if possible -- to point-block incidence matrices of designs.

Compared with the well-known method of Kramer and Mesner \cite{Kramer-Mesner-1976} which also restricts the search space
to designs with prescribed automorphisms, the method of Janko and Tran Van Trung has the advantage that it is not necessary to compute all orbits of $k$-subsets of $V$ and therefore allows the search for $2$-$(v,k,\lambda)$ designs with larger $k$ and smaller automorphism group.
The drawback however is that it does not reduce the search space if the prescribed group of automorphisms is point-transitive, and that it seemed to be restricted to $2$-designs for a long time.

The majority of publications on the construction of designs with tactical decompositions are based on the point-block incidence matrix and
only involve constraints derived from the property of a $2$-design.
The only exceptions we are aware of are the articles
\cite{Krcadinac2011,Krcadinac2014,Nakic2015},
where constraints are given for the tactical decomposition of the point-block incidence matrix of a $t$-design of general strength $t\geq 2$.

In this paper, we consider $t$-designs of any strength $t$ and combine Wilson's \cite{Wilson1982}
equations for higher incidence matrices with tactical decompositions.
General tactical decomposition matrices for the incidences of $e$-subsets vs.~blocks with
$e\in\{0,\ldots,t-1\}$ are introduced, and constraints are derived for any strength
$\leq t$.
As a result we get a unification and generalization of all above mentioned Fisher-like inequalities.
Furthermore, we  explore the capabilities of combining higher tactical decomposition matrices
with the method of Janko and Tran van Trung for the construction of designs with $t> 2$.
While our approach is presented for classical block designs, it can also be adapted to subspace designs in a straightforward way.
The interplay to the equations in \cite{Krcadinac2014} deserves further investigation.

\section{Preliminaries}\label{sect:pre}

\subsection{Combinatorial designs}

It is well known that a $t$-$(v,k,\lambda)$ design $(V, \des)$
is also an $s$-$(v,k,\lambda_s)$ design
for $0\leq s\leq t$ where
\[
    \lambda_s = \lambda\,\frac{\binom{v-s}{t-s}}{\binom{k-s}{t-s}} \,.
\]
In particular, $\lambda_0$ is the number of blocks of the design and $\lambda_1$ is the number of blocks
each point is contained in, which is called the \emph{replication number}.
It is also well known that for a $t$-$(v,k,\lambda)$ design the number of blocks
which contain a given $i$-set of points and are disjoint to a given $j$-set of points is equal to
\[
    \lambda_{i,j} = \lambda\, \frac{\binom{v-i-j}{k-j}}{\binom{v-t}{k-t}}\,,
\]
see e.g. \cite[II.4.2, p. 80]{handbook}.

\subsection{Higher incidence matrices}

The $v\times \lambda_0$ point-block incidence matrix $N$ of a $t$-$(v,k,\lambda)$ design $(V, \des)$ is defined
by
\[
    N_{P, B} = \begin{cases}
        1, & \text{if $P\in B$,} \\
        0, & \text{otherwise}
    \end{cases}
\]
for $P\in V$ and $B\in\des$.
Bose \cite{Bose1949} showed for the point-block incidence matrix
$N$ of a $2$-$(v,k,\lambda)$ design the equation
\begin{equation}
    NN^\top = \lambda_1 I + \lambda(J-I\text) \text{,} \label{eq:bose}
\end{equation}
where $I$ is the $v\times v$ identity matrix and $J$ is the $v\times v$ all-ones matrix.

\medskip
For a general $t$-$(v,k,\lambda)$ design with $t\geq 2$,
the $\binom{v}{e}\times \lambda_0$ higher incidence matrix $N^{(e)}$ for $e\leq k$ is defined by
\[
    N^{(e)}_{E, B} = \begin{cases}
        1, & \text{if $E\subset B$,} \\
        0, & \text{otherwise}
    \end{cases}
\]
for $E\in \binom{V}{e}$ and $B\in\des$.
The $\binom{v}{s}\times \binom{v}{e}$ incidence matrix $W^{(se)}$ between $s$-subsets and \emph{all} $e$-subsets of $V$ is defined by
\[
    W^{(se)}_{S, E} = \begin{cases}
        1, & \text{if $S\subset E$,} \\
        0, & \text{otherwise}
    \end{cases}
\]
for $S\in \binom{V}{s}$ and $E\in \binom{V}{e}$.
Wilson \cite{Wilson1982} showed for $e+f\leq t$ the equation
\begin{equation}
    N^{(e)}\,(N^{(f)})^\top = \sum_{i=0}^{\min\{e,f\}} \lambda_{e+f-i,\, i} (W^{(ie)})^\top\, W^{(if)} \,. \label{eq:wilson}
\end{equation}
Note that $N^{(e)}(N^{(f)})^\top$ contains in the row labeled by the $e$-subset $E$ and in the column labeled
by the $f$-subset $F$ the number of blocks of the design which contain both $E$ and $F$.
It is clear that this number is $\lambda_{e+f -\mu}$ with $\mu = \#(E\cap F)$, i.\,e.
\[
    \big(N^{(e)}\,(N^{(f)})^\top\big)_{E,F} = \lambda_{\#(E\cup F)} \,.
\]
Also in \cite{Wilson1982}, Wilson proved among others the equation
\begin{equation}
    W^{(ie)}\,N^{(e)} = \binom{k-i}{e-i}\,N^{(i)}\qquad \mbox{for } 0\leq i\leq e \leq k\,.\label{eq:wilson2}
\end{equation}
For $e=1$ and $i=0$ equation (\ref{eq:wilson2}) simply states that each block of the design contains $k$ points.

\subsection{Tactical decomposition matrices}
Dembowski~\cite{Dembowski1958,Dembowski1968} studied tactical decompositions of incidence structures from group actions,
see also Beutelspacher \cite[pp. 210--220]{Beutelspacher1982}.

Let $(V,\des)$ be a $2$-$(v,k,\lambda)$ design invariant under some group $G$.
The action of $G$ partitions $V$ into orbits $\cP_1, \ldots,\cP_m$ and $\des$ into orbits $\cB_1,\ldots,\cB_n$.
Let $N$ be the point-block incidence matrix of $(V,\des)$ and for $i\in\{1,\ldots,m\}$ and $j\in\{1,\ldots,n\}$ let $N_{i,j}$ be the submatrix of $N$ whose rows are assigned to the elements $\cP_i$ and whose columns to the elements of $\cB_j$.
Then $N_{i,j}$ has a constant number of ones in each row and a constant number of ones in each column.
Such a decomposition of $N$ into submatrices $N_{i,j}$ is called \emph{tactical}.

If we replace for all $i,j$ the submatrix $N_{i,j}$ by the number of ones in each row we get an $(m\times n)$-matrix $\rho$, and if we replace the submatrix $N_{i,j}$ by the number of ones in each column we get an $(m\times n)$-matrix $\kappa$.
The matrices $\rho$ and $\kappa$ are both called \emph{tactical decomposition matrix}.
In \cite{Dembowski1958} the following properties of $\rho$ and $\kappa$ and the matrices
$P = \diag(\# \cP_i)$ and $B = \diag(\# \cB_i)$ are shown:

\begin{align}
    P \cdot \rho                & = \kappa \cdot  B \label{eq:alltop:old}                                         \\
    \rho\cdot (1,\ldots,1)^\top & = (\lambda_1,\ldots,\lambda_1)^\top \label{eq:rho_rowsum:old}                   \\
    (1,\ldots,1) \cdot \kappa   & = (k,\ldots,k) \label{eq:kappa_colsum:old}                                      \\
    \rho\cdot \kappa            & = (\lambda_1 -\lambda)\cdot I + \lambda\cdot P \cdot J \label{eq:rho_kappa:old}
\end{align}

Janko and Tran Van Trung \cite{Janko1985} and many follow-up constructions are using these four equations to build up all
non-isomorphic tactical decomposition matrices $\rho$ (and $\kappa$), usually row-by-row.
In the next section we adapt Wilson's equations for higher incidence matrices
for tactical decomposition matrices.

\section{Higher tactical decomposition matrices}\label{sect:high}

\subsection{The tactical matrices $R$ and $K$}
We fix a finite set $V$ of size $v$.
For $x\in\{0,\ldots,v\}$, let $\mathfrak{P}_x$ be a partition of the set $\binom{V}{x}$.
The part of $\mathfrak{P}_x$ containing some $X\in\binom{V}{x}$ will be denoted by $[X]$.
We call $(\mathfrak{P}_0,\ldots,\mathfrak{P}_v)$ a \emph{tactical sequence of partitions} on $V$
if for all $x,y\in\{0,\ldots,v\}$ with $x \leq y$ and for all $[X], \mathcal{X}\in \mathfrak{P}_x$ and $[Y], \mathcal{Y}\in \mathfrak{P}_y$, the numbers
\[
    R^{(xy)}_{[X],\mathcal{Y}} = \#\{Y\in\mathcal{Y} \mid X \subseteq Y\}
    \qquad\text{and}\qquad
    K^{(xy)}_{\mathcal{X},[Y]} = \#\{X\in\mathcal{X} \mid X \subseteq Y\}
\]
are well-defined, i.\,e.\ they do not depend on the choice of the representative $X$ of $[X]$ nor of the representative $Y$ of $[Y]$.
In this case, the above defined numbers yield matrices $R^{(xy)}, K^{(xy)} \in\mathbb{Z}^{\mathfrak{P}_x \times \mathfrak{P}_y}$.%
\footnote{Formally, it would be more accurate to write $R^{(x,y)}$ and $K^{(x,y)}$ instead of $R^{(xy)}$ and $K^{(xy)}$.
For simplicity, we omit the separating comma whenever there is no danger of confusion.}
A common source of tactical sequences of partitions are permutation groups $G \leq S_V$, where for all $x\in\{0,\ldots,v\}$ the partition $\mathfrak{P}_x$ is the set of orbits of the induced action of $G$ on $\binom{V}{x}$.
The trivial group $G = \{\operatorname{id}_V\}$ leads to the extreme case that all $\mathfrak{P}_x$ are discrete partitions, i.\,e.\ $\mathfrak{P}_x = \{\{\mathcal{X}\} \mid \mathcal{X}\in\binom{V}{x}\}$.
We will refer to this situation as the \enquote{case $\#G = 1$}, where for all $x\leq y$ the matrix $R^{(xy)} = K^{(xy)}$ equals Wilson's higher incidence matrix $W^{(xy)}$.

In the following, we fix a tactical sequence $(\mathfrak{P}_0,\ldots,\mathfrak{P}_v)$ of partitions on $V$.
The matrices $R^{(xx)}$ and $K^{(xx)}$ are $\#\mathfrak{P}_x \times \#\mathfrak{P}_x$ identity matrices.
The matrices $R^{(0x)}$ and $K^{(0x)}$ are of size $1\times\#\mathfrak{P}_x$, where all entries of $K^{(0x)}$ are $1$, and $R^{(0x)}$ contains the part sizes, i.\,e.\ $R^{(0x)}_{\{\emptyset\},\mathcal{X}} = \#\mathcal{X}$.

\begin{example}\label{ex:v6order3}
	We consider the set $V = \{1,2,3,4,5,6\}$ of size $v = \#V = 6$ and the group $G = \langle (1\,2\,3)\,(4\,5\,6)\rangle$ of order $3$ acting on $V$.
	Abbreviating sets $\{a,b\}$ as $ab$ etc., the orbits of $G$ yield the partitions
	\begin{align*}
		\mathfrak{P}_0 & = \{\emptyset\}\text{,}\\
		\mathfrak{P}_1 & = \big\{\{1,2,3\},\{4,5,6\}\big\}\text{,} \\
		\mathfrak{P}_2 & = \big\{\{12,13,23\},\{14,25,36\},\{15,26,34\},\{16,24,35\},\{45,46,56\}\big\}\text{,}\\
		\mathfrak{P}_3 & = \big\{\{123\},\{456\},\{124,235,136\},\{125,236,134\},\{126,234,135\}, \\
		& \phantom{{}=\big\{} \{145,256,346\},\{146,245,356\},\{156,246,345\}\big\}\text{.}
	\end{align*}
	The resulting matrices $R^{(xy)}$ and $K^{(xy)}$ with $0\leq x \leq y \leq 3$ are%
	\footnote{For the assignment of the elements of the partitions $\mathfrak{P}_x$ and $\mathfrak{P}_y$ to the rows and columns, the partitions are ordered as listed above.}
	\begin{align*}
		R^{(00)} & = (1) &
		K^{(00)} & = (1) \\
		R^{(01)} & = \begin{pmatrix}3 & 3\end{pmatrix} &
		K^{(01)} & = \begin{pmatrix}1 & 1\end{pmatrix} \\
		R^{(11)} & = \begin{pmatrix}1 & 0\\0 & 1\end{pmatrix} &
		K^{(11)} & = \begin{pmatrix}1 & 0\\0 & 1\end{pmatrix} \\
		R^{(02)} & = \begin{pmatrix}3 & 3 & 3 & 3 & 3\end{pmatrix} &
		K^{(02)} & = \begin{pmatrix}1 & 1 & 1 & 1 & 1\end{pmatrix} \\
		R^{(12)} & = \begin{pmatrix}2 & 1 & 1 & 1 & 0\\0 & 1 & 1 & 1 & 2\end{pmatrix} &
		K^{(12)} & = \begin{pmatrix}2 & 1 & 1 & 1 & 0\\0 & 1 & 1 & 1 & 2\end{pmatrix} \\
		R^{(22)} & = I_5 &
		K^{(22)} & = I_5 \\
		R^{(03)} & = \begin{pmatrix}1 & 1 & 3 & 3 & 3 & 3 & 3 & 3\end{pmatrix} &
		K^{(03)} & = \begin{pmatrix}1 & 1 & 1 & 1 & 1 & 1 & 1 & 1\end{pmatrix} \\
		R^{(13)} & = \begin{pmatrix}1 & 0 & 2 & 2 & 2 & 1 & 1 & 1\\0 & 1 & 1 & 1 & 1 & 2 & 2 & 2\end{pmatrix} &
		K^{(13)} & = \begin{pmatrix}3 & 0 & 2 & 2 & 2 & 1 & 1 & 1\\0 & 3 & 1 & 1 & 1 & 2 & 2 & 2\end{pmatrix} \\
		R^{(23)} & = \begin{pmatrix}1 & 0 & 1 & 1 & 1 & 0 & 0 & 0\\0 & 0 & 1 & 1 & 0 & 1 & 1 & 0\\0 & 0 & 0 & 1 & 1 & 1 & 0 & 1\\0 & 0 & 1 & 0 & 1 & 0 & 1 & 1\\0 & 1 & 0 & 0 & 0 & 1 & 1 & 1\end{pmatrix} &
		K^{(23)} & = \begin{pmatrix}3 & 0 & 1 & 1 & 1 & 0 & 0 & 0\\0 & 0 & 1 & 1 & 0 & 1 & 1 & 0\\0 & 0 & 0 & 1 & 1 & 1 & 0 & 1\\0 & 0 & 1 & 0 & 1 & 0 & 1 & 1\\0 & 3 & 0 & 0 & 0 & 1 & 1 & 1\end{pmatrix} \\
		R^{(33)} & = I_8 &
		K^{(33)} & = I_8
	\end{align*}
\end{example}

As suggested by the example, the following lemma shows that matrices $R^{(xy)}$ and $K^{(xy)}$ determine each other.
For $x\in\{0,\ldots,v\}$ we define $D^{(x)}\in\mathbb{Z}^{\mathfrak{P}_x \times \mathfrak{P}_x}$ as the invertible diagonal matrix
with entries $D^{(x)}_{\mathcal{X},\mathcal{X}} = \#\mathcal{X}$.

\begin{lemma}\label{lem:alltop}
    Let $x,y\in\{0,\ldots,v\}$ be integers with $x \leq y$.
    Then for all $\mathcal{X}\in\mathfrak{P}_x$ and all $\mathcal{Y}\in\mathfrak{P}_y$ we have
    \[
	    \#\mathcal{X} \cdot R^{(xy)}_{\mathcal{X},\mathcal{Y}} = \#\mathcal{Y} \cdot K^{(xy)}_{\mathcal{X},\mathcal{Y}}\text{.}
    \]
    This can be rewritten as the equality of matrix products
    \[
	    D^{(x)}\, R^{(xy)} = K^{(xy)}\, D^{(y)}\text{.}
    \]
\end{lemma}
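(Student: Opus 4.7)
The plan is to prove the scalar identity by a straightforward double-counting argument and then translate it into the matrix form. The key observation is that both sides count, in two different ways, the same set of incident pairs
\[
    \mathcal{I}(\mathcal{X},\mathcal{Y}) = \{(X,Y) \in \mathcal{X}\times\mathcal{Y} \mid X\subseteq Y\}\text{.}
\]

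First I would count $\#\mathcal{I}(\mathcal{X},\mathcal{Y})$ by summing over $X\in\mathcal{X}$. For each such $X$ we have $[X] = \mathcal{X}$, so by the well-definedness guaranteed by the tactical sequence assumption, the number of $Y\in\mathcal{Y}$ with $X\subseteq Y$ equals $R^{(xy)}_{[X],\mathcal{Y}} = R^{(xy)}_{\mathcal{X},\mathcal{Y}}$, independent of the choice of $X$. Summing over the $\#\mathcal{X}$ elements of $\mathcal{X}$ yields
\[
    \#\mathcal{I}(\mathcal{X},\mathcal{Y}) = \#\mathcal{X} \cdot R^{(xy)}_{\mathcal{X},\mathcal{Y}}\text{.}
\]
Symmetrically, summing over $Y\in\mathcal{Y}$ first and using that $K^{(xy)}_{\mathcal{X},[Y]} = K^{(xy)}_{\mathcal{X},\mathcal{Y}}$ is independent of the representative $Y$, we obtain
\[
    \#\mathcal{I}(\mathcal{X},\mathcal{Y}) = \#\mathcal{Y} \cdot K^{(xy)}_{\mathcal{X},\mathcal{Y}}\text{.}
\]
Equating the two expressions gives the scalar identity.

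For the matrix reformulation, I note that since $D^{(x)}$ is diagonal with $D^{(x)}_{\mathcal{X},\mathcal{X}} = \#\mathcal{X}$, the $(\mathcal{X},\mathcal{Y})$-entry of $D^{(x)} R^{(xy)}$ is exactly $\#\mathcal{X} \cdot R^{(xy)}_{\mathcal{X},\mathcal{Y}}$, and similarly the $(\mathcal{X},\mathcal{Y})$-entry of $K^{(xy)} D^{(y)}$ is $K^{(xy)}_{\mathcal{X},\mathcal{Y}} \cdot \#\mathcal{Y}$. The scalar identity then immediately yields $D^{(x)} R^{(xy)} = K^{(xy)} D^{(y)}$.

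There is no real obstacle here; the proof relies entirely on the tactical property, which is precisely what guarantees that the counts $R^{(xy)}_{\mathcal{X},\mathcal{Y}}$ and $K^{(xy)}_{\mathcal{X},\mathcal{Y}}$ are representative-independent and can be uniformly summed. The only subtlety worth flagging is the trivial boundary case $x = y = 0$ or $x = y$, where the identity reduces to $\#\mathcal{X} = \#\mathcal{X}$ on the diagonal, consistent with $R^{(xx)} = K^{(xx)} = I$.
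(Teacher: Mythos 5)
Your proof is correct and is exactly the paper's argument: the paper's proof is the one-line instruction to count $\{(X,Y)\in\mathcal{X}\times\mathcal{Y}\mid X\subseteq Y\}$ in two ways, which you have simply written out in full, together with the routine translation into the matrix identity.
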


\begin{proof}
    Count the set $\{(X,Y)\in \mathcal{X}\times\mathcal{Y} \mid X \subseteq Y\}$ in two ways.
\end{proof}

\begin{lemma}\label{lem:RK_transitive}
    Let $x,y,z\in\{0,\ldots,v\}$ be integers with $x\leq y\leq z$.
    Then
    \[
        R^{(xy)} \, R^{(yz)} = \binom{z-x}{y-x}\, R^{(xz)}
        \qquad\text{and}\qquad
        K^{(xy)} \, K^{(yz)} = \binom{z-x}{y-x}\, K^{(xz)}\text{.}
    \]
\end{lemma}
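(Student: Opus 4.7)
The plan is to prove the $R$-identity by double counting, and then deduce the $K$-identity formally from Lemma~\ref{lem:alltop}.

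For the $R$-identity, fix $\mathcal{X}\in\mathfrak{P}_x$ and $\mathcal{Z}\in\mathfrak{P}_z$, and pick any representative $X\in\mathcal{X}$. I would consider the set
\[
S = \{(Y,Z) \mid Y\in\tbinom{V}{y},\; Z\in\mathcal{Z},\; X\subseteq Y\subseteq Z\}\text{.}
\]
Counting $S$ by grouping the middle sets according to their $\mathfrak{P}_y$-class $[Y]=\mathcal{Y}$ gives, for each $\mathcal{Y}$, a contribution of $\#\{Y\in\mathcal{Y}\mid X\subseteq Y\}\cdot\#\{Z\in\mathcal{Z}\mid Y\subseteq Z\}$. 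By the tactical hypothesis, the first factor equals $R^{(xy)}_{\mathcal{X},\mathcal{Y}}$ (independent of the choice of $X\in\mathcal{X}$) and the second equals $R^{(yz)}_{\mathcal{Y},\mathcal{Z}}$ (independent of the choice of $Y\in\mathcal{Y}$). Summing over $\mathcal{Y}\in\mathfrak{P}_y$ yields $\#S = (R^{(xy)}\,R^{(yz)})_{\mathcal{X},\mathcal{Z}}$. On the other hand, counting $S$ by $Z$ first: for each $Z\in\mathcal{Z}$ with $X\subseteq Z$ (there are $R^{(xz)}_{\mathcal{X},\mathcal{Z}}$ of them), the number of interpolating $y$-sets $Y$ with $X\subseteq Y\subseteq Z$ is the purely combinatorial $\binom{z-x}{y-x}$. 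This yields $\#S = \binom{z-x}{y-x}\,R^{(xz)}_{\mathcal{X},\mathcal{Z}}$, and comparing the two counts gives the $R$-identity entrywise.

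For the $K$-identity I would avoid repeating the double count and instead transport the result via Lemma~\ref{lem:alltop}. Multiplying the already established equation $R^{(xy)}\,R^{(yz)} = \binom{z-x}{y-x}\,R^{(xz)}$ on the left by $D^{(x)}$ and applying $D^{(x)}R^{(xy)}=K^{(xy)}D^{(y)}$ and then $D^{(y)}R^{(yz)}=K^{(yz)}D^{(z)}$ gives
\[
K^{(xy)}\,K^{(yz)}\,D^{(z)} \;=\; \binom{z-x}{y-x}\,K^{(xz)}\,D^{(z)}\text{,}
\]
and since $D^{(z)}$ is an invertible diagonal matrix, it may be cancelled on the right.

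The only point that needs a sentence of care is the first count: one must observe that $R^{(yz)}_{\mathcal{Y},\mathcal{Z}}$ really is the number of $Z\in\mathcal{Z}$ above \emph{any} particular $Y\in\mathcal{Y}$ (not merely the number of incident pairs averaged over $\mathcal{Y}$), which is exactly what the tactical hypothesis provides. Beyond that bookkeeping, the argument is a routine inclusion double count, and I do not foresee a genuine obstacle.
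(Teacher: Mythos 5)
Your proof is correct and follows essentially the same route as the paper: the paper also establishes the $R$-identity by double counting the set $\{(Y,Z)\in\tbinom{V}{y}\times\mathcal{Z} \mid X\subseteq Y\subseteq Z\}$. The only difference is that the paper handles the $K$-identity by an analogous second double count, whereas you deduce it formally from Lemma~\ref{lem:alltop} by conjugating with the invertible diagonal matrices $D^{(x)}$, $D^{(y)}$, $D^{(z)}$ --- a valid and slightly more economical variant (and the same trick the paper itself uses later for the $W$-matrices).
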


\begin{proof}
    We fix $[X] = \mathcal{X}\in\mathfrak{P}_x$ and $\mathcal{Z}\in\mathfrak{P}_z$.
    By counting the set
    \[
        A = \big\{(Y,Z)\in \tbinom{V}{y} \times \mathcal{Z} \mid X \subseteq Y \subseteq Z\big\}
    \]
    in two ways, we get that
    \[
        \big(R^{(xy)}\,R^{(yz)}\big)_{\mathcal{X},\mathcal{Z}}
        = \sum_{\mathcal{Y}\in \mathfrak{P}_y} R^{(xy)}_{\mathcal{X},\mathcal{Y}}\, R^{(yz)}_{\mathcal{Y},\mathcal{Z}}
        = \#A
        = R^{(xz)}_{\mathcal{X},\mathcal{Z}}\,\binom{z-x}{y-x}\text{.}
    \]
    The equality for the $K$-matrices is shown analogously.
\end{proof}

\begin{lemma}\label{lem:R_neighbors}
	Let $x,y\in\{0,\ldots,v\}$ be integers with $x \leq y$.
	Then
	\begin{align*}
		R^{(xy)} & = \frac{1}{(y-x)!}\, R^{(x,x+1)}\, R^{(x+1,x+2)}\,\cdot\ldots\cdot\, R^{(y-1,y)}\quad\text{and}\\
		K^{(xy)} & = \frac{1}{(y-x)!}\, K^{(x,x+1)}\, K^{(x+1,x+2)}\,\cdot\ldots\cdot\, K^{(y-1,y)}\text{.}
	\end{align*}
\end{lemma}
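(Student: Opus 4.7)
The plan is to prove both identities by induction on $d = y-x$, using Lemma \ref{lem:RK_transitive} as the telescoping step. I give the argument for $R$; the argument for $K$ is identical, replacing $R$ by $K$ throughout.

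For the base cases $d=0$ and $d=1$ the claim is trivial: when $d=0$ the right-hand side is the empty product divided by $0! = 1$, which is the $\#\mathfrak{P}_x \times \#\mathfrak{P}_x$ identity matrix and equals $R^{(xx)}$; when $d=1$ both sides equal $R^{(x,x+1)}$.

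For the inductive step, assume the formula holds whenever the index difference is $d-1$, and let $y = x+d$ with $d \geq 2$. Applying Lemma \ref{lem:RK_transitive} to the triple $(x, y-1, y)$ gives
\[
	R^{(x,y-1)}\, R^{(y-1,y)} = \binom{y-x}{y-1-x}\, R^{(xy)} = d\, R^{(xy)}\text{,}
\]
so that
\[
	R^{(xy)} = \frac{1}{d}\, R^{(x,y-1)}\, R^{(y-1,y)}\text{.}
\]
By the induction hypothesis applied to the pair $(x,y-1)$,
\[
	R^{(x,y-1)} = \frac{1}{(d-1)!}\, R^{(x,x+1)}\, R^{(x+1,x+2)}\,\cdot\ldots\cdot\, R^{(y-2,y-1)}\text{.}
\]
Substituting this into the previous equation yields the claimed factorization, since $\frac{1}{d}\cdot\frac{1}{(d-1)!} = \frac{1}{d!} = \frac{1}{(y-x)!}$.

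There is no real obstacle here: the only point to notice is that the binomial coefficient $\binom{d}{d-1} = d$ produced by Lemma \ref{lem:RK_transitive} is precisely the factor needed to turn $(d-1)!$ in the denominator into $d!$, so the telescoping is clean. The identical calculation with $K$ in place of $R$ (again invoking Lemma \ref{lem:RK_transitive}) proves the second identity.
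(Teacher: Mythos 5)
Your proof is correct and follows essentially the same route as the paper: an induction on $y-x$ whose step is Lemma~\ref{lem:RK_transitive}. The only cosmetic difference is that you split off the last factor $R^{(y-1,y)}$ (via the triple $(x,y-1,y)$, giving the coefficient $\binom{d}{d-1}=d$) whereas the paper splits off the first factor $R^{(x,x+1)}$ (via $(x,x+1,y)$, giving $\binom{d}{1}=d$); both yield the same telescoping.
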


\begin{proof}
	The statement is true for $x = y$.
	For $x < y$, Lemma~\ref{lem:RK_transitive} gives
	\[
		R^{(xy)} = \frac{1}{y-x}\, R^{(x,x+1)} R^{(x+1,y)}\text{,}
		\]
	which inductively equals
	\begin{multline*}
		\frac{1}{y-x}\, R^{(x,x+1)}\, \cdot \frac{1}{(y-x-1)!}\, R^{(x+1,x+2)}\cdot\ldots\cdot R^{(y-1,y)} \\
		= \frac{1}{(y-x)!}\, R^{(x,x+1)}\cdot\ldots\cdot R^{(y-1,y)}\text{.}
	\end{multline*}
	The statement for $K^{(xy)}$ is shown in the same way.
\end{proof}

\begin{remark}\label{rem:R_chain}
Lemma~\ref{lem:R_neighbors} shows that for any non-negative integer $s$, the chain of matrices $R^{(01)}, R^{(12)},\ldots, R^{(s-1,s)}$ determines all the matrices $R^{(xy)}$ with integers $x,y$ and $0 \leq x \leq y \leq s$.
Moreover, as the diagonal of the diagonal matrix $D^{(x)}$ equals the row vector $R^{(0x)}$, by Lemma~\ref{lem:alltop} also the matrices $K^{(xy)}$ with $0 \leq x \leq y \leq s$ are determined.
\end{remark}

\begin{lemma}\label{lem:KR}
    Let $x,y,z\in\{0,\ldots,v\}$.
    \begin{enumerate}[(a)]
        \item\label{lem:KR:KtR} For $\varmid \leq \min(\varleft,\varright)$, the entries of the matrix $(K^{(\varmid\varleft)})^\top \, R^{(\varmid\varright)}\in \mathbb{\Varright}^{\mathfrak{P}_\varleft\times\mathfrak{P}_\varright}$ are given by
        \[
            \big((K^{(\varmid\varleft)})^{\top}\, R^{(\varmid\varright)}\big)_{[\Varleft], \mathcal{\Varright}}
            = \sum_{\Varright\in\mathcal{\Varright}}\binom{\#(\Varleft\cap \Varright)}{\varmid}\text{.}
        \]
        \item\label{lem:KR:RKt} For $\max(\varleft,\varright) \leq \varmid$, the entries of the matrix $R^{(\varleft\varmid)}\, (K^{(\varright\varmid)})^\top$ are given by
        \[
            \big(R^{(\varleft\varmid)}\, (K^{(\varright\varmid)})^\top\big)_{[\Varleft],\mathcal{\Varright}}
            = \sum_{\Varright\in\mathcal{\Varright}} \binom{v - \#(\Varleft \cup \Varright)}{v - \varmid}\text{.}
        \]
    \end{enumerate}
\end{lemma}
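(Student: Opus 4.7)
The plan for both parts is a direct double-counting argument, in the same spirit as the proofs of Lemma~\ref{lem:RK_transitive} and Lemma~\ref{lem:R_neighbors}. The key observation that makes each counting work cleanly is the well-definedness of the matrix entry with respect to the representative that will eventually be absorbed into the sum.

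For part~(a), I first expand
\[
    \big((K^{(yx)})^\top\, R^{(yz)}\big)_{[X],\mathcal{Z}}
    = \sum_{\mathcal{Y}\in\mathfrak{P}_y} K^{(yx)}_{\mathcal{Y},[X]}\cdot R^{(yz)}_{\mathcal{Y},\mathcal{Z}},
\]
and then use that $R^{(yz)}_{\mathcal{Y},\mathcal{Z}} = \#\{Z\in\mathcal{Z} : Y\subseteq Z\}$ is independent of the representative $Y \in \mathcal{Y}$. This lets me push that factor inside the defining sum of $K^{(yx)}_{\mathcal{Y},[X]} = \#\{Y \in \mathcal{Y} : Y \subseteq X\}$. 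Since the parts $\mathcal{Y}\in\mathfrak{P}_y$ partition $\binom{V}{y}$, the outer sum then collapses to
\[
    \sum_{\substack{Y\in\binom{V}{y}\\ Y\subseteq X}} \#\{Z\in\mathcal{Z}: Y\subseteq Z\} = \#\big\{(Y,Z)\in\tbinom{V}{y}\times\mathcal{Z} : Y\subseteq X\cap Z\big\}.
\]
Counting these pairs the other way around by fixing $Z$ first and then choosing $Y \subseteq X\cap Z$ of size $y$ gives $\sum_{Z\in\mathcal{Z}}\binom{\#(X\cap Z)}{y}$, the claimed identity.

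Part~(b) is completely dual. Expanding $\big(R^{(xy)}(K^{(zy)})^\top\big)_{[X],\mathcal{Z}} = \sum_{\mathcal{Y}} R^{(xy)}_{[X],\mathcal{Y}} \cdot K^{(zy)}_{\mathcal{Z},\mathcal{Y}}$ and applying the well-definedness of $K^{(zy)}_{\mathcal{Z},[Y]}$ in the representative $Y$, I rewrite the double sum as a count of pairs $(Y,Z) \in \binom{V}{y}\times\mathcal{Z}$ with $X\cup Z \subseteq Y$. For fixed $Z$, the number of such $y$-subsets $Y$ equals $\binom{v-\#(X\cup Z)}{y-\#(X\cup Z)} = \binom{v-\#(X\cup Z)}{v-y}$, yielding the formula. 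No serious obstacle is expected; the only care needed is to match which side of each product carries the representative-dependent argument against the hypotheses $y\leq\min(x,z)$ in (a) versus $\max(x,z)\leq y$ in (b)---these constraints fix the directions $Y\subseteq X,\ Y\subseteq Z$ and $X\subseteq Y,\ Z\subseteq Y$, respectively, and thus determine whether intersections $X\cap Z$ or unions $X\cup Z$ appear on the right-hand side.
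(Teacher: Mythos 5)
Your proof is correct and follows essentially the same route as the paper: both parts are established by double-counting the sets $\{(Y,Z)\in\binom{V}{y}\times\mathcal{Z} \mid Y\subseteq X\cap Z\}$ and $\{(Y,Z)\in\binom{V}{y}\times\mathcal{Z} \mid X\cup Z\subseteq Y\}$, exactly as in the paper's proof. You merely spell out in more detail how the well-definedness of the entries lets the matrix-product sum collapse to the pair count, which the paper leaves implicit.
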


\begin{proof}
    For part~\ref{lem:KR:KtR}, we fix two parts $[\Varleft] = \mathcal{\Varleft} \in \mathfrak{P}_\varleft$ and $\mathcal{\Varright}\in\mathfrak{P}_\varright$.
    By counting the set
    \[
        A = \big\{ (\Varmid,\Varright)\in \tbinom{V}{\varmid} \times \mathcal{\Varright} \mid \Varmid \subseteq \Varleft\cap \Varright\big\}
    \]
    in two ways, we get that
    \begin{multline*}
        \big((K^{(\varmid\varleft)})^\top\, R^{(\varmid\varright)}\big)_{\mathcal{\Varleft},\mathcal{\Varright}}
        = \sum_{\mathcal{\Varmid}\in\mathfrak{P}_\varmid} K^{(\varmid\varleft)}_{\mathcal{\Varmid},\mathcal{\Varleft}} R^{(\varmid\varright)}_{\mathcal{\Varmid},\mathcal{\Varright}} \\
        = \#A
        = \sum_{\Varright\in\mathcal{\Varright}} \#\{\Varmid\in \tbinom{V}{\varmid} \mid \Varmid \subseteq \Varleft\cap \Varright\}
        = \sum_{\Varright\in\mathcal{\Varright}} \binom{\#(\Varleft \cap \Varright)}{\varmid}\text{.}
    \end{multline*}

    Part~\ref{lem:KR:RKt} is shown similarly by counting $\{(\Varmid,\Varright)\in \tbinom{V}{\varmid}\times\mathcal{\Varright} \mid \Varleft \cup \Varright \subseteq \Varmid\}$ in two ways.
    Alternatively, it can be derived from part~\ref{lem:KR:KtR} by dualization, or
    it will follow as a special case of the later Lemma~\ref{lem:wilson_lhs}.
\end{proof}

\begin{lemma}\label{lem:RK_constant_sum}
    Let $x,y\in\{0,\ldots,v\}$ be integers with $x \leq y$.
    Then the matrix $K^{(xy)}$ has constant column sum $\binom{y}{x}$, and the matrix $R^{(xy)}$ has constant row sum $\binom{v-x}{v-y}$.
\end{lemma}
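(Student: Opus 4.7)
The plan is to prove each of the two statements by the most direct means, namely a double count, exploiting only that the $\mathfrak{P}_z$ are partitions of $\binom{V}{z}$. Alternatively and more slickly, both statements can be read off from Lemma~\ref{lem:RK_transitive} by specializing one index to an extreme value; I would include this alternative as a sanity check.

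For the column sums of $K^{(xy)}$, I would fix any column index $\mathcal{Y} = [Y]\in\mathfrak{P}_y$ and rewrite the column sum as
\[
\sum_{\mathcal{X}\in\mathfrak{P}_x} K^{(xy)}_{\mathcal{X},\mathcal{Y}}
= \sum_{\mathcal{X}\in\mathfrak{P}_x} \#\{X\in\mathcal{X}\mid X\subseteq Y\}\text{.}
\]
Since the parts of $\mathfrak{P}_x$ form a partition of $\binom{V}{x}$, this telescopes into $\#\{X\in\binom{V}{x}\mid X\subseteq Y\} = \binom{y}{x}$, a quantity depending only on $x$ and $y$.

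For the row sums of $R^{(xy)}$, the symmetric argument works: fixing a representative $X$ of any row index $\mathcal{X}=[X]\in\mathfrak{P}_x$, the row sum equals $\#\{Y\in\binom{V}{y}\mid X\subseteq Y\} = \binom{v-x}{y-x} = \binom{v-x}{v-y}$, again independent of the row. As an alternative derivation, applying Lemma~\ref{lem:RK_transitive} to the triple $0\leq x\leq y$ yields $K^{(0x)}\,K^{(xy)} = \binom{y}{x}\, K^{(0y)}$; since $K^{(0x)}$ and $K^{(0y)}$ are all-ones row vectors, the left-hand side is exactly the row vector of column sums of $K^{(xy)}$. Similarly, applying Lemma~\ref{lem:RK_transitive} to $x\leq y\leq v$ and using that $\mathfrak{P}_v = \{\{V\}\}$ makes $R^{(yv)}$ and $R^{(xv)}$ all-ones column vectors, turning $R^{(xy)}\,R^{(yv)}=\binom{v-x}{y-x}\,R^{(xv)}$ into the row-sum identity.

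There is essentially no obstacle here: everything is a one-line double count, and the well-definedness of the entries is built into the hypothesis that $(\mathfrak{P}_0,\ldots,\mathfrak{P}_v)$ is a tactical sequence, so no further checking is required. The only decision is stylistic, namely whether to present the bare counting argument or the Lemma~\ref{lem:RK_transitive} specialization; I would opt for the short counting proof since it is self-contained and mirrors the proofs of the preceding lemmas.
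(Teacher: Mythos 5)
Your proof is correct, and both of your routes work; the paper itself takes the ``slicker'' route you relegate to a sanity check. For the column sums of $K^{(xy)}$ the paper does exactly what you describe: it applies Lemma~\ref{lem:RK_transitive} to the triple $0\leq x\leq y$ so that $K^{(0x)}\,K^{(xy)}=\binom{y}{x}\,K^{(0y)}$ becomes the row-vector of column sums (the paper's displayed coefficient $\binom{x}{y}$ is a typo for $\binom{y}{x}$). For the row sums of $R^{(xy)}$ the paper instead invokes Lemma~\ref{lem:KR}\ref{lem:KR:RKt} with the right index equal to $0$, so that $(K^{(0y)})^\top$ is the all-ones column vector and the entry formula collapses to $\binom{v-\#(X\cup\emptyset)}{v-y}=\binom{v-x}{v-y}$; your alternative of going to the top of the lattice via $R^{(xy)}\,R^{(yv)}=\binom{v-x}{y-x}\,R^{(xv)}$ with $\mathfrak{P}_v=\{\{V\}\}$ is an equally valid specialization of Lemma~\ref{lem:RK_transitive} and arguably more symmetric with the $K$-case. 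Your primary argument, the direct double count using only that $\mathfrak{P}_x$ partitions $\binom{V}{x}$, is self-contained and just inlines the counting that underlies those earlier lemmas, so nothing is lost either way; the paper's choice merely emphasizes that the lemma is a formal consequence of the already-established matrix identities.
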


\begin{proof}
    Lemma~\ref{lem:RK_transitive} with $x \leftarrow 0$, $y \leftarrow x$ and $z \leftarrow y$ gives
    \[
	    (1\, \ldots\, 1)\cdot K^{(xy)} = \binom{x}{y} \cdot (1\,\ldots\,1)\text{.}
    \]
    Lemma~\ref{lem:KR}\ref{lem:KR:RKt} with $z = 0$ gives
    \[
	    R^{(xz)} \cdot (1\,\ldots\,1)^\top = \binom{v-x}{v-z} \cdot (1\,\ldots\,1)^\top\text{.}
    \]
\end{proof}

\subsection{The tactical decomposition matrices $\rho^{(x)}$ and $\kappa^{(x)}$}
Now we fix a non-empty $t$-$(v,k,\lambda)$ design $(V,\des)$ with $t \leq k \leq v-t$, such that the block set is the union of parts in $\mathfrak{P}_k$, i.\,e.\ $\des = \bigcup\mathfrak{B}$ with $\mathfrak{B} \subseteq\mathfrak{P}_k$.
Its numbers $\lambda_{i,j}$ and $\lambda_i = \lambda_{i,0}$ are defined as in Section \ref{sect:pre}.
Since $\des$ is non-empty, $\lambda_{i,j} > 0$ for all admissible $i,j$, i.\,e.\ for all non-negative integers $i,j$ with $i+j \leq t$.

For $x\in\{0,\ldots,k\}$ we define the tactical decomposition matrices $\rho^{(x)}, \kappa^{(x)} \in \mathbb{Z}^{\mathfrak{P}_x\times \mathfrak{B}}$ via
\[
    \rho^{(x)}_{[X], \mathcal{B}} = \#\{B\in \mathcal{B} \mid X \subseteq B\}
    \qquad\text{and}\qquad
    \kappa^{(x)}_{\mathcal{X}, [B]} = \#\{X\in \mathcal{X} \mid X \subseteq B\}\text{.}
\]
By the properties of the fixed tactical sequence $(\mathfrak{P}_0,\ldots,\mathfrak{P}_v)$ of partitions on $V$, this definition does not depend on the choice of the representatives.
Note that $\rho^{(x)}$ is the restriction of $R^{(xk)}$ to the columns whose labels are contained in $\mathfrak{B}$.
The other way round, $R^{(xy)}$ is the matrix $\rho^{(x)}$ of the complete design $\binom{V}{y}$.
A similar note holds for the $\kappa$- and $K$-matrices.
In particular, $\rho^{(0)}$ and $\kappa^{(0)}$ are of size $1\times\#\mathfrak{B}$, where all entries of $\kappa^{(0)}$ are $1$, and $\rho^{(0)}$ contains the sizes of the block parts.

The next lemma says that the matrices $\rho^{(x)}$ and $\kappa^{(x)}$ determine each other.
It is a direct consequence of Lemma~\ref{lem:alltop}.
We define $\delta\in\mathbb{Z}^{\mathfrak{B}\times\mathfrak{B}}$ as the diagonal matrix with the entries $\delta_{\mathcal{B},\mathcal{B}} = \#\mathcal{B}$.

\begin{lemma}\label{lem:alltop_design}
	Let $x\in\{0,\ldots,k\}$
	Then for all $\mathcal{X}\in\mathfrak{P}_x$ and all $\mathcal{B}\in\mathfrak{B}$ we have
	\[
		\#\mathcal{X} \cdot \rho^{(x)}_{\mathcal{X},\mathcal{B}} = \#\mathcal{B} \cdot \kappa^{(x)}_{\mathcal{X},\mathcal{B}}\text{.}
	\]
	This can be rewritten as the equality of matrix products
	\[
		D^{(x)}\, \rho^{(x)} = \kappa^{(x)}\, \delta\text{.}
	\]
\end{lemma}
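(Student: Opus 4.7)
The plan is to observe that the statement is essentially a restriction of Lemma~\ref{lem:alltop} from all of $\mathfrak{P}_k$ to the sub-collection $\mathfrak{B}\subseteq\mathfrak{P}_k$ of block orbits, so almost no work is required beyond invoking that lemma correctly.

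Specifically, I would first recall the remark (immediately preceding the lemma) that $\rho^{(x)}$ is precisely the submatrix of $R^{(xk)}$ obtained by restricting to the columns indexed by the parts $\mathcal{B}\in\mathfrak{B}$, and analogously $\kappa^{(x)}$ is the corresponding column-restriction of $K^{(xk)}$. Hence for any $\mathcal{X}\in\mathfrak{P}_x$ and any $\mathcal{B}\in\mathfrak{B}$, the entry $\rho^{(x)}_{\mathcal{X},\mathcal{B}}$ equals $R^{(xk)}_{\mathcal{X},\mathcal{B}}$ and $\kappa^{(x)}_{\mathcal{X},\mathcal{B}}$ equals $K^{(xk)}_{\mathcal{X},\mathcal{B}}$. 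Then Lemma~\ref{lem:alltop}, applied with $y = k$, immediately gives
\[
    \#\mathcal{X} \cdot \rho^{(x)}_{\mathcal{X},\mathcal{B}}
    = \#\mathcal{X} \cdot R^{(xk)}_{\mathcal{X},\mathcal{B}}
    = \#\mathcal{B} \cdot K^{(xk)}_{\mathcal{X},\mathcal{B}}
    = \#\mathcal{B} \cdot \kappa^{(x)}_{\mathcal{X},\mathcal{B}}\text{,}
\]
which is the desired entrywise identity.

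For the matrix-product reformulation $D^{(x)}\,\rho^{(x)} = \kappa^{(x)}\,\delta$, I would simply note that left-multiplying $\rho^{(x)}$ by the diagonal matrix $D^{(x)}$ scales the row indexed by $\mathcal{X}$ by $\#\mathcal{X}$, while right-multiplying $\kappa^{(x)}$ by $\delta$ scales the column indexed by $\mathcal{B}$ by $\#\mathcal{B}$; so the entrywise identity just proved is exactly this matrix equality.

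There is essentially no obstacle here: the only thing worth doing carefully is confirming that the restriction to $\mathfrak{B}$ is compatible with the diagonal matrix $\delta$ (which is indexed by $\mathfrak{B}$, not by all of $\mathfrak{P}_k$). Alternatively, to keep the proof self-contained, one can bypass Lemma~\ref{lem:alltop} entirely and count $\{(X,B)\in\mathcal{X}\times\mathcal{B} \mid X\subseteq B\}$ in two ways, which yields $\#\mathcal{X}\cdot\rho^{(x)}_{\mathcal{X},\mathcal{B}}$ from one side and $\#\mathcal{B}\cdot\kappa^{(x)}_{\mathcal{X},\mathcal{B}}$ from the other; I would mention this as the alternative one-line proof.
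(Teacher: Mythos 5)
Your proposal is correct and matches the paper's own treatment: the paper gives no separate proof for this lemma, stating only that it is a direct consequence of Lemma~\ref{lem:alltop} via the identification of $\rho^{(x)}$ and $\kappa^{(x)}$ as column-restrictions of $R^{(xk)}$ and $K^{(xk)}$, which is exactly your argument. Your alternative self-contained double count of $\{(X,B)\in\mathcal{X}\times\mathcal{B}\mid X\subseteq B\}$ is also valid and mirrors the proof of Lemma~\ref{lem:alltop} itself.
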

In the special case $x = 1$, Lemma~\ref{lem:alltop_design} recovers Equation~\eqref{eq:alltop:old}.

\begin{example}\label{ex:v6order3_rho_kappa}
	We continue with Example~\ref{ex:v6order3} and consider $G$-invariant $2$-$(6,3,2)$ designs.
	Its $\lambda_{ij}$-values are displayed in the following triangle.
	\[
		\begin{array}{ccccc}
			& &  \lambda_{0,0} = 10  \\
			& \lambda_{1,0} = 5 & & \lambda_{0,1} = 5 \\
			\lambda_{2,0} = 2 & & \lambda_{1,1} = 3 & & \lambda_{0,2} = 2
		\end{array}
	\]
	There are several ways of forming unions of the orbits in $\mathfrak{P}_3$ to get a $G$-invariant $2$-$(6,3,2)$ design $(V,\des)$.
	We consider the one given by the $1$st, $3$rd, $6$th and $8$th orbit (in the ordering of Example~\ref{ex:v6order3}), that is $\des = \bigcup\mathfrak{B}$ with
	\[
		\mathfrak{B} = \big\{\{123\},\{124,235,136\},\{145,256,346\},\{156,246,345\}\big\}\text{.}
	\]
	The restriction of the matrices $R^{(x3)}$ and $K^{(x3)}$ to the corresponding columns yields the following tactical decomposition matrices $\rho^{(x)}$ and $\kappa^{(x)}$ of $(V,\des)$.
	\begin{align*}
		\rho^{(0)} & = \begin{pmatrix}1 & 3 & 3 & 3\end{pmatrix} &
		\kappa^{(0)} & = \begin{pmatrix}1 & 1 & 1 & 1\end{pmatrix} \\
		\rho^{(1)} & = \begin{pmatrix}1 & 2 & 1 & 1\\0 & 1 & 2 & 2\end{pmatrix} &
		\kappa^{(1)} & = \begin{pmatrix}3 & 2 & 1 & 1\\0 & 1 & 2 & 2\end{pmatrix} \\
		\rho^{(2)} & = \begin{pmatrix}1 & 1 & 0 & 0\\0 & 1 & 1 & 0\\0 & 0 & 1 & 1\\0 & 1 & 0 & 1\\0 & 0 & 1 & 1\end{pmatrix} &
		\kappa^{(2)} & = \begin{pmatrix}3 & 1 & 0 & 0\\0 & 1 & 1 & 0\\0 & 0 & 1 & 1\\0 & 1 & 0 & 1\\0 & 0 & 1 & 1\end{pmatrix}
	\end{align*}
\end{example}

The following lemma shows that the matrices $\rho^{(x)}$ and $\kappa^{(x)}$ with lower values of $x$ are determined by the higher ones.
\begin{lemma}\label{lem:wilson_2nd}
    Let $x,y$ be non-negative integers with $x \leq y \leq k$.
    Then
    \[
        R^{(xy)}\,\rho^{(y)} = \binom{k-x}{y-x} \, \rho^{(x)}
        \qquad\text{and}\qquad
        K^{(xy)}\,\kappa^{(y)} = \binom{k-x}{y-x}\kappa^{(x)}\text{.}
    \]
\end{lemma}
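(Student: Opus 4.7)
The plan is to prove the $R$-identity by direct double counting—mimicking the proof of Lemma~\ref{lem:RK_transitive} but restricting the column index to a block orbit—and then to derive the $K$-identity from the $R$-identity via the two ``transpose'' lemmas already established (Lemma~\ref{lem:alltop} and Lemma~\ref{lem:alltop_design}).

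First I would fix arbitrary parts $\mathcal{X} \in \mathfrak{P}_x$ with a representative $X$, and $\mathcal{B}\in\mathfrak{B}$, and count the set
\[
    A = \bigl\{(Y,B)\in \tbinom{V}{y}\times \mathcal{B} \mid X \subseteq Y \subseteq B\bigr\}
\]
in two ways. Summing first over $\mathcal{Y}\in\mathfrak{P}_y$, the number of $Y\in\mathcal{Y}$ with $X\subseteq Y$ is $R^{(xy)}_{\mathcal{X},\mathcal{Y}}$, and by the tactical property (applied to $\mathcal{Y}$ and $\mathcal{B}\subseteq\mathfrak{P}_k$) the number of $B\in\mathcal{B}$ with $Y\subseteq B$ depends only on $\mathcal{Y}$, namely $\rho^{(y)}_{\mathcal{Y},\mathcal{B}}$. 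So
\[
    \#A = \sum_{\mathcal{Y}\in\mathfrak{P}_y} R^{(xy)}_{\mathcal{X},\mathcal{Y}}\,\rho^{(y)}_{\mathcal{Y},\mathcal{B}} = \bigl(R^{(xy)}\rho^{(y)}\bigr)_{\mathcal{X},\mathcal{B}}.
\]
On the other hand, summing first over $B\in\mathcal{B}$ with $X\subseteq B$ (of which there are $\rho^{(x)}_{\mathcal{X},\mathcal{B}}$) and noting that for each such $B$ there are exactly $\binom{k-x}{y-x}$ choices for $Y$ with $X\subseteq Y\subseteq B$, we obtain $\#A = \binom{k-x}{y-x}\,\rho^{(x)}_{\mathcal{X},\mathcal{B}}$. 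Equating the two counts yields the first claim.

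For the second identity, rather than repeat the argument, I would deduce it algebraically. Combining Lemma~\ref{lem:alltop} ($D^{(x)}R^{(xy)} = K^{(xy)}D^{(y)}$) with Lemma~\ref{lem:alltop_design} ($D^{(y)}\rho^{(y)} = \kappa^{(y)}\delta$ and analogously for $x$), we may rewrite
\[
    R^{(xy)}\,\rho^{(y)} = (D^{(x)})^{-1} K^{(xy)} D^{(y)} (D^{(y)})^{-1}\kappa^{(y)}\delta = (D^{(x)})^{-1} K^{(xy)} \kappa^{(y)}\,\delta,
\]
while $\binom{k-x}{y-x}\rho^{(x)} = \binom{k-x}{y-x}(D^{(x)})^{-1}\kappa^{(x)}\delta$. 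Multiplying the established $R$-identity on the left by $D^{(x)}$ and on the right by $\delta^{-1}$ (both invertible) immediately yields $K^{(xy)}\kappa^{(y)} = \binom{k-x}{y-x}\kappa^{(x)}$.

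There is no real obstacle here: the identity is a restricted version of Lemma~\ref{lem:RK_transitive}, and the key technical point is simply that the tactical-sequence hypothesis guarantees all intermediate counts are well-defined on parts. The only thing one must be a little careful about is that in the double count the chosen representative $X$ does not influence the final count—but this is precisely what the tactical property provides, and it is already implicit in the well-definedness of $\rho^{(y)}$ and $R^{(xy)}$. As an alternative to the algebraic derivation of the $K$-identity, one can just repeat the double counting on $\{(X,Y)\in\mathcal{X}\times\binom{V}{y}\mid X\subseteq Y\subseteq B\}$ for a fixed representative $B$ of $\mathcal{B}$.
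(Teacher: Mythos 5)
Your proof is correct, but it takes a slightly different route from the paper's. The paper proves this lemma in two lines: it applies Lemma~\ref{lem:RK_transitive} with $z = k$ to get $R^{(xy)}R^{(yk)} = \binom{k-x}{y-x}R^{(xk)}$ and $K^{(xy)}K^{(yk)} = \binom{k-x}{y-x}K^{(xk)}$, and then simply restricts both sides to the columns labelled by elements of $\mathfrak{B}$, using the observation (made right after the definition of $\rho^{(x)}$ and $\kappa^{(x)}$) that these are exactly the column restrictions of $R^{(xk)}$ and $K^{(xk)}$. You instead redo the double count of $\{(Y,B) \mid X \subseteq Y \subseteq B\}$ from scratch with a block part $\mathcal{B}$ in place of a part $\mathcal{Z}\in\mathfrak{P}_k$ --- which is the same combinatorial idea as in the proof of Lemma~\ref{lem:RK_transitive}, just not recycled --- and you obtain the $K$-identity by conjugating with the diagonal matrices $D^{(x)}$ and $\delta$ via Lemmas~\ref{lem:alltop} and~\ref{lem:alltop_design}, where the paper again just restricts the $K$-version of Lemma~\ref{lem:RK_transitive}. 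Both arguments are sound; the paper's is shorter because it exploits the structural fact that $\rho^{(x)}$ \emph{is} a column restriction of $R^{(xk)}$, while yours is more self-contained and illustrates that the $\rho$/$\kappa$ matrices are linked by the same diagonal rescaling as the $R$/$K$ matrices. Your closing alternative (double counting $\{(X,Y)\in\mathcal{X}\times\binom{V}{y} \mid X\subseteq Y\subseteq B\}$ for a fixed representative $B$) is also valid and is the exact analogue of the paper's treatment of the $K$-case.
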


\begin{proof}
    By Lemma~\ref{lem:RK_transitive}, $R^{(xy)}\, R^{(yk)} = \binom{k-x}{y-x}\, R^{(xk)}$ and $K^{(xy)}\, K^{(yk)} = \binom{k-x}{y-x}\, K^{(xk)}$.
    The restriction of these matrices to the columns belonging to the elements of $\mathfrak{B}$ gives the statement of the lemma.
\end{proof}

As in Lemma~\ref{lem:RK_constant_sum}, for $x = 0$ the $K$-equation in Lemma~\ref{lem:wilson_2nd} states that the sum of each column of $\kappa^{(y)}$ equals $\binom{k}{y}$.
In the special case $x = 0$ and $y = 1$, we get back Equation~\eqref{eq:kappa_colsum:old}.

\begin{lemma}\label{lem:wilson_lhs}
    Let $e,f$ be non-negative integers with $e + f \leq t$.
    Then the entries of the matrix $\rho^{(e)}\cdot(\kappa^{(f)})^\top \in \mathbb{Z}^{\mathfrak{P}_e\times\mathfrak{P}_f}$ are given by
    \[
        \big(\rho^{(e)}\, (\kappa^{(f)})^\top\big)_{[E],\mathcal{F}}
        = \sum_{F\in\mathcal{F}} \lambda_{\#(E \cup F)}\text{.}
    \]
    In particular, the matrix $\rho^{(e)}\,(\kappa^{(f)})^\top$ only depends on the tactical sequence $(\mathfrak{P}_0,\ldots,\mathfrak{P}_v)$ of partitions and the parameters of the design, but not on the specific choice of the blocks.
\end{lemma}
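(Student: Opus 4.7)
The plan is to expand the matrix product entrywise and interpret the result as a double count.

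Fixing a part $[E] = \mathcal{E} \in \mathfrak{P}_e$ and $\mathcal{F} \in \mathfrak{P}_f$, I would write
\[
    \big(\rho^{(e)}\,(\kappa^{(f)})^\top\big)_{[E],\mathcal{F}}
    = \sum_{\mathcal{B}\in\mathfrak{B}} \rho^{(e)}_{[E],\mathcal{B}}\, \kappa^{(f)}_{\mathcal{F},\mathcal{B}}\text{.}
\]
By definition, $\rho^{(e)}_{[E],\mathcal{B}} = \#\{B\in\mathcal{B} : E\subseteq B\}$, and $\kappa^{(f)}_{\mathcal{F},\mathcal{B}} = \#\{F\in\mathcal{F}: F\subseteq B\}$ for any chosen representative $B\in\mathcal{B}$. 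Because the tactical sequence of partitions guarantees that this latter count is independent of the chosen representative, I can distribute it inside a sum over the blocks of $\mathcal{B}$ that contain $E$, obtaining
\[
    \rho^{(e)}_{[E],\mathcal{B}}\, \kappa^{(f)}_{\mathcal{F},\mathcal{B}}
    = \sum_{\substack{B\in\mathcal{B}\\ E\subseteq B}} \#\{F\in\mathcal{F}: F\subseteq B\}\text{.}
\]

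Summing over $\mathcal{B}\in\mathfrak{B}$ merges the orbits back into the full block set $\des = \bigcup\mathfrak{B}$, so the sum equals the cardinality of
\[
    \{(B,F) \in \des \times \mathcal{F} : E\subseteq B \text{ and } F\subseteq B\}\text{.}
\]
Swapping the order of summation yields
\[
    \sum_{F\in\mathcal{F}} \#\{B\in\des : E\cup F \subseteq B\}\text{.}
\]

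The final step is to recognize each inner count as $\lambda_{\#(E\cup F)}$. This requires $\#(E\cup F) \leq t$, which holds because $\#(E\cup F) \leq e+f\leq t$, so $\lambda_{\#(E\cup F)}$ is well-defined via the formula in Section~\ref{sect:pre}. The \enquote{in particular} clause then follows for free, since the right-hand side only involves the partitions $\mathfrak{P}_e, \mathfrak{P}_f$ and the $\lambda$-parameters of the design, with no reference to which block orbits make up $\mathfrak{B}$. There is no real obstacle here; the only point worth flagging is the verification that the representative-independence built into the definition of a tactical sequence legitimizes the passage from $\kappa^{(f)}_{\mathcal{F},\mathcal{B}}$ to a sum over individual blocks.
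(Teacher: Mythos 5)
Your proof is correct and follows essentially the same route as the paper: both arguments double-count the set $\{(B,F)\in\des\times\mathcal{F} \mid E\cup F\subseteq B\}$ and use $\#(E\cup F)\le e+f\le t$ to identify the inner count with $\lambda_{\#(E\cup F)}$. Your extra remark on representative-independence just makes explicit a step the paper leaves implicit.
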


\begin{proof}
    We fix two parts $[E] = \mathcal{E} \in\mathfrak{P}_e$ and $\mathcal{F}\in\mathfrak{P}_f$.
    Counting the set
    \[
        A = \{ (B,F)\in \des\times \mathcal{F} \mid E \cup F \subseteq B\}
    \]
    in two ways, we see that
    \begin{multline*}
        \big(\rho^{(e)}\, (\kappa^{(f)})^\top\big)_{\mathcal{E},\mathcal{F}}
        = \sum_{\mathcal{B}\in\mathfrak{B}} \rho^{(e)}_{\mathcal{E},\mathcal{B}}\, \kappa^{(f)}_{\mathcal{F},\mathcal{B}} \\
        = \#A
        = \sum_{F\in\mathcal{F}} \#\{B\in \des \mid E \cup F \subseteq B\}
        = \sum_{F\in\mathcal{F}} \lambda_{\#(E \cup F)}\text{.}
    \end{multline*}
\end{proof}

For the complete design $\des = \binom{V}{k}$, Lemma~\ref{lem:wilson_lhs} reduces to Lemma~\ref{lem:KR}\ref{lem:KR:RKt}.
In the special case $f = 0$, Lemma~\ref{lem:wilson_lhs} states that the sum of each row of $\rho^{(e)}$ equals $\lambda_e$.
In the special case $e = f = 0$, Lemma~\ref{lem:wilson_lhs} yields the formula $\sum_{\mathcal{B}\in\mathfrak{B}} \#\mathcal{B} = \lambda_0$, which says that the sizes of the block parts add up to the size of the design.
In the special case $e = 1$ and $f = 0$, we recover Equation~\eqref{eq:rho_rowsum:old}.

The following lemma is contained in \cite[proof of Prop.~1]{Wilson1982}, essentially.

\begin{lemma}\label{lem:pascal-formula}
    Let $x,y$ be non-negative integers with $x + y \leq t$.
    Then
    \[
        \lambda_x = \sum_{j=0}^y \lambda_{x+j,\, y-j} \binom{y}{j} = \sum_{j=0}^y \lambda_{x+y-j,\,j}\binom{y}{j}\text{.}
    \]
\end{lemma}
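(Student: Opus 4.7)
My plan is to prove the first equality by a two-way count of pairs (block, subset of a fixed $y$-set), and then obtain the second equality by reindexing.

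First, I will fix an $x$-subset $X \subseteq V$ and a $y$-subset $Y \subseteq V$ disjoint from $X$. Such disjoint sets exist because the standing assumption $t \leq k \leq v - t$ together with $x + y \leq t$ gives $x + y \leq v$. I then count the set
\[
    A = \{(B, S) \mid B \in \des,\ S \subseteq Y,\ X \cup S \subseteq B,\ B \cap (Y \setminus S) = \emptyset\}
\]
in two ways. On one hand, for each block $B$ containing $X$ there is exactly one $S$ making $(B,S) \in A$, namely $S = B \cap Y$; hence $\#A = \lambda_x$. On the other hand, stratifying by $j = \#S$: for a fixed $j$-subset $S \subseteq Y$, the set $X \cup S$ has size $x+j$ and $Y \setminus S$ has size $y-j$, so by definition of $\lambda_{i,j}$ the number of admissible blocks $B$ is $\lambda_{x+j,\,y-j}$. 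Summing over the $\binom{y}{j}$ choices of $S$ and over $j$ yields $\#A = \sum_{j=0}^y \binom{y}{j} \lambda_{x+j,\,y-j}$, which gives the first identity.

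The second identity then follows immediately by the substitution $j \mapsto y - j$ in the summation index. The only thing to verify carefully is admissibility of the symbols $\lambda_{x+j,\,y-j}$, i.\,e.\ that $(x+j) + (y-j) = x + y \leq t$, which is precisely the hypothesis of the lemma. This is really the only potential obstacle, and it is automatic: no index conditions are violated, and no independence from the choices of $X$ and $Y$ needs to be argued since $\lambda_{i,j}$ depends only on the sizes. An alternative route, worth mentioning as a cross-check, is to substitute the explicit formula $\lambda_{i,j} = \lambda \binom{v-i-j}{k-j}/\binom{v-t}{k-t}$ from Section~\ref{sect:pre} and recognize the resulting identity $\binom{v-x}{k} = \sum_{j} \binom{y}{j}\binom{v-x-y}{k-j}$ as Vandermonde's convolution; but the double counting argument above is cleaner and more in the spirit of the rest of the paper.
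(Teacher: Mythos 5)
Your proof is correct and is essentially the paper's own argument: the paper likewise fixes disjoint sets $X$ and $Y$ of sizes $x$ and $y$ and double counts $\{(B,J)\in\des\times 2^Y \mid X\subseteq B \text{ and } B\cap Y = J\}$, which is exactly your set $A$ written with $J = B\cap Y$ in place of your two conditions on $S$. The extra checks you include (admissibility of the indices, the reindexing $j\mapsto y-j$, and the Vandermonde cross-check) are fine but not needed beyond what the paper states.
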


\begin{proof}
    By $x + y \leq t$ there exist disjoint subsets $X$ and $Y$ of $V$ of size $x$ and $y$.
    Double counting the set $\{(B,J) \in \des \times 2^Y \mid X \subseteq B \text{ and } B \cap Y = J\}$ gives the stated formula.
\end{proof}

\begin{theorem}\label{thm:wilson}
    Let $V$ be a finite set of size $v$ and let $(\mathfrak{P}_0,\ldots,\mathfrak{P}_v)$ be a tactical sequence of partitions on $V$.
    Let $(V,\des)$ be a non-empty $t$-$(v,k,\lambda)$ design with $t\leq k \leq v-t$, such that the block set has the form $\des = \bigcup\mathfrak{B}$ with $\mathfrak{B} \subseteq \mathfrak{P}_k$.
    Let $e,f$ be non-negative integers with $e + f \leq t$.

    Then
    \[
        \rho^{(e)}\, (\kappa^{(f)})^\top
        = \sum_{j=0}^{\min(e,f)} \lambda_{e + f - j,\, j} \,(K^{(je)})^\top R^{(jf)}\text{.}
    \]
\end{theorem}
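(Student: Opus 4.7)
The plan is to verify the identity entry-by-entry. Fix parts $\mathcal{E} = [E]\in\mathfrak{P}_e$ and $\mathcal{F} = [F]\in\mathfrak{P}_f$ and compare the $(\mathcal{E},\mathcal{F})$-entries of both sides.

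For the left-hand side, Lemma~\ref{lem:wilson_lhs} immediately gives
\[
    \big(\rho^{(e)}\,(\kappa^{(f)})^\top\big)_{\mathcal{E},\mathcal{F}} = \sum_{F\in\mathcal{F}}\lambda_{\#(E\cup F)}\text{.}
\]
For each $F\in\mathcal{F}$ write $\mu = \#(E\cap F)$, so that $\#(E\cup F) = e+f-\mu$. The key step is to expand the single $\lambda$-value using Lemma~\ref{lem:pascal-formula} with $x = e+f-\mu$ and $y = \mu$ (which is admissible since $x+y = e+f \leq t$):
\[
    \lambda_{e+f-\mu} = \sum_{j=0}^{\mu}\lambda_{e+f-j,\,j}\binom{\mu}{j}\text{.}
\]
Since $\mu \leq \min(e,f)$ and $\binom{\mu}{j} = 0$ for $j>\mu$, the upper summation index may be replaced by $\min(e,f)$ without changing the value.

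Substituting this expansion, swapping the order of summation over $j$ and $F\in\mathcal{F}$, and pulling out the factor $\lambda_{e+f-j,\,j}$, the left-hand side becomes
\[
    \sum_{j=0}^{\min(e,f)}\lambda_{e+f-j,\,j}\sum_{F\in\mathcal{F}}\binom{\#(E\cap F)}{j}\text{.}
\]
For the right-hand side, Lemma~\ref{lem:KR}\ref{lem:KR:KtR} identifies the inner sum exactly as
\[
    \big((K^{(je)})^\top R^{(jf)}\big)_{\mathcal{E},\mathcal{F}} = \sum_{F\in\mathcal{F}}\binom{\#(E\cap F)}{j}\text{,}
\]
which matches term-by-term and completes the argument.

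The only potential obstacle is keeping the indices in Lemma~\ref{lem:pascal-formula} straight; the useful form is the second one ($\lambda_x = \sum_j \lambda_{x+y-j,\,j}\binom{y}{j}$), with $y = \mu$ playing the role of the intersection size being summed over and $x+y-j = e+f-j$ recovering the parameter appearing on the right-hand side of the theorem. Once this substitution is set up correctly, the remaining steps are just reordering a sum and quoting Lemma~\ref{lem:KR}\ref{lem:KR:KtR}.
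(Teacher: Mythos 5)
Your argument is correct and follows essentially the same route as the paper: compute the $(\mathcal{E},\mathcal{F})$-entry of the left-hand side via Lemma~\ref{lem:wilson_lhs}, expand each $\lambda_{\#(E\cup F)}$ with Lemma~\ref{lem:pascal-formula} (second form, $x=\#(E\cup F)$, $y=\#(E\cap F)$), swap the sums, and identify the inner sum via Lemma~\ref{lem:KR}\ref{lem:KR:KtR}. Your remark that the upper summation limit can be raised from $\mu$ to $\min(e,f)$ because the extra binomial coefficients vanish is a detail the paper leaves implicit, but otherwise the two proofs coincide.
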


\begin{proof}
    We fix two parts $[E] = \mathcal{E} \in\mathfrak{P}_e$ and $\mathcal{F}\in\mathfrak{P}_f$.
    By Lemma~\ref{lem:wilson_lhs}
    \[
        \big(\rho^{(e)}\, (\kappa^{(f)})^\top\big)_{\mathcal{E},\mathcal{F}}
        = \sum_{F\in\mathcal{F}} \lambda_{\#(E \cup F)}\text{.}
    \]
    By Lemma~\ref{lem:pascal-formula} (with $x = \#(E \cup F)$ and $y = \#(E \cap F)$; note that $x + y = e + f \leq t$) and Lemma~\ref{lem:KR}\ref{lem:KR:KtR}, this expression equals
    \[
        \sum_{F\in\mathcal{F}} \sum_{j=0}^{\min(e,f)} \lambda_{e+f-j,\, j} \binom{\#(E \cap F)}{j}
        = \sum_{j=0}^{\min(e,f)} \lambda_{e + f - j,\, j} \,\big((K^{(je)})^\top R^{(jf)}\big)_{\mathcal{E},\mathcal{F}}\text{.}
    \]
\end{proof}

In the special case $e = f = 1$, Theorem~\ref{thm:wilson} gives Equation~\eqref{eq:rho_kappa:old}.

\subsection{The averaged matrices $W^{(xy)}$ and $\omega^{(x)}$}
In the case $\#G = 1$, all parts are of size $1$ and hence $R^{(xy)} = K^{(xy)}$ for all $x \leq y$ and $\rho^{(x)} = \kappa^{(x)}$ for all $x$.
To mimic that situation, we introduce \enquote{averaged} versions of $R^{(xy)}$ and $K^{(xy)}$ and of $\rho^{(x)}$ and $\kappa^{(y)}$, based on the transformation formulas in Lemma~\ref{lem:alltop} and Lemma~\ref{lem:alltop_design}.

As all diagonal matrices $D^{(x)}$ and $\delta$ have positive diagonal entries, it makes sense to write $\sqrt{D^{(x)}}$ and $\sqrt{\delta}$, where the diagonal entries are replaced by their (positive) square roots.
Clearly, all the matrices $D^{(x)}$, $\sqrt{D^{(x)}}$, $\delta$ and $\sqrt{\delta}$ are invertible.
Now for integers $x,y$ with $0 \leq x \leq y\leq v$ we define
\[
	W^{(xy)}
	= \sqrt{D^{(x)}}\, R^{(xy)}\, \sqrt{D^{(y)}}^{-1}
	= \sqrt{D^{(x)}}^{-1}\, K^{(xy)}\, \sqrt{D^{(y)}}
\]
and for an integer $x$ with $0\leq x \leq k$ we define
\[
	\omega^{(x)}
	= \sqrt{D^{(x)}}\, \rho^{(x)}\, \sqrt{\delta}^{-1}
	= \sqrt{D^{(x)}}^{-1}\, \kappa^{(x)}\, \sqrt{\delta}
\]
Again, $\omega^{(x)}$ is the restriction of the matrix $W^{(xk)}$ to the columns whose labels are contained in $\mathfrak{B}$.
In the case $\#G = 1$, $W^{(xy)}$ and $\omega^{(x)}$ equal Wilson's $W$- and $N$-matrices in \cite{Wilson1982}.

\begin{example}\label{ex:v6order3_omega}
	In Example~\ref{ex:v6order3_rho_kappa}, we have
	\begin{align*}
		\omega^{(0)} & = \begin{pmatrix}1 & \sqrt{3} & \sqrt{3} & \sqrt{3}\end{pmatrix}\text{,} \\
		\omega^{(1)} & = \begin{pmatrix}\sqrt{3} & 2 & 1 & 1\\0 & 1 & 2 & 2\end{pmatrix}\text{,} \\
		\omega^{(2)} & = \begin{pmatrix}\sqrt{3} & 1 & 0 & 0\\0 & 1 & 1 & 0\\0 & 0 & 1 & 1\\0 & 1 & 0 & 1\\0 & 0 & 1 & 1\end{pmatrix}\text{.}
	\end{align*}
\end{example}

Our above results can be transformed into formulas for the averaged matrices.

\begin{lemma}\label{lem:W_transitive}
	Let $x,y,z\in\{0,\ldots,v\}$ be integers with $x\leq y \leq z$.
	Then
	\[
		W^{(xy)}\, W^{(yz)} = \binom{z-x}{y-x}\, W^{(xz)}\text{.}
	\]
\end{lemma}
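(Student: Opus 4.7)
The plan is to prove the identity by direct substitution of the definition of $W^{(xy)}$ in terms of $R^{(xy)}$ and then appealing to Lemma~\ref{lem:RK_transitive}. The key observation is that the definition is set up precisely so that the factors $\sqrt{D^{(y)}}^{-1}$ coming from the right of $W^{(xy)}$ and $\sqrt{D^{(y)}}$ coming from the left of $W^{(yz)}$ cancel in the middle of the product, leaving a product of $R$-matrices sandwiched between $\sqrt{D^{(x)}}$ and $\sqrt{D^{(z)}}^{-1}$.

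Concretely, I would write
\[
	W^{(xy)}\, W^{(yz)} = \sqrt{D^{(x)}}\, R^{(xy)}\, \sqrt{D^{(y)}}^{-1}\, \sqrt{D^{(y)}}\, R^{(yz)}\, \sqrt{D^{(z)}}^{-1} = \sqrt{D^{(x)}}\, R^{(xy)}\, R^{(yz)}\, \sqrt{D^{(z)}}^{-1}\text{,}
\]
then apply Lemma~\ref{lem:RK_transitive} to replace $R^{(xy)} R^{(yz)}$ by $\binom{z-x}{y-x}\, R^{(xz)}$, and finally recognise the resulting expression as $\binom{z-x}{y-x}\, W^{(xz)}$ by the definition of $W^{(xz)}$. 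One could equally well carry out the argument with the alternative $K$-based expression $W^{(xy)} = \sqrt{D^{(x)}}^{-1} K^{(xy)} \sqrt{D^{(y)}}$ and the second half of Lemma~\ref{lem:RK_transitive}; the two derivations are consistent because of Lemma~\ref{lem:alltop}, which is the very identity that makes the two defining expressions of $W^{(xy)}$ equal in the first place.

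There is no real obstacle here: the lemma is essentially a formal consequence of Lemma~\ref{lem:RK_transitive} together with the fact that $\sqrt{D^{(y)}}$ is invertible so that the inner diagonal factors telescope. The only thing to check en route is that the product $\sqrt{D^{(y)}}^{-1}\sqrt{D^{(y)}} = I$ is meaningful, which follows from the positivity of the diagonal entries of $D^{(y)}$ already noted in the text. Thus the proof should fit in two short displayed lines.
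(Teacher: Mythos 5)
Your proposal is correct and follows exactly the paper's own argument: substitute $W^{(xy)} = \sqrt{D^{(x)}}\, R^{(xy)}\, \sqrt{D^{(y)}}^{-1}$, cancel the inner diagonal factors, and apply Lemma~\ref{lem:RK_transitive}. Nothing is missing.
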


\begin{proof}
	By Lemma~\ref{lem:RK_transitive} we have $R^{(xy)}\, R^{(yz)} = \binom{z-x}{y-x}\, R^{(xz)}$.
	Therefore
	\[
		\sqrt{D^{(x)}}\, R^{(xy)}\, \sqrt{D^{(y)}}^{-1} \cdot \sqrt{D^{(y)}}\, R^{(yz)}\, \sqrt{D^{(z)}}^{-1} = \tbinom{z-x}{y-x}\,\sqrt{D^{(x)}}\, R^{(xz)}\, \sqrt{D^{(z)}}^{-1}\text{,}
	\]
	which gives the claimed statement.
\end{proof}

\begin{lemma}
	Let $x,y\in\{0,\ldots,k\}$ with $x \leq y$.
	Then
	\[
		W^{(xy)}\, \omega^{(y)} = \binom{k-x}{y-x}\,\omega^{(x)}\text{.}
	\]
\end{lemma}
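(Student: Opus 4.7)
The plan is to mirror the argument of Lemma~\ref{lem:W_transitive}: expand $W^{(xy)}$ and $\omega^{(y)}$ according to their definitions, let the factors involving $\sqrt{D^{(y)}}$ cancel in the middle, and apply Lemma~\ref{lem:wilson_2nd} to the surviving product $R^{(xy)}\,\rho^{(y)}$.

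Concretely, I would first substitute the definitions
\[
    W^{(xy)} = \sqrt{D^{(x)}}\, R^{(xy)}\, \sqrt{D^{(y)}}^{-1}
    \qquad\text{and}\qquad
    \omega^{(y)} = \sqrt{D^{(y)}}\, \rho^{(y)}\, \sqrt{\delta}^{-1}
\]
to obtain
\[
    W^{(xy)}\, \omega^{(y)} = \sqrt{D^{(x)}}\, R^{(xy)}\, \sqrt{D^{(y)}}^{-1}\, \sqrt{D^{(y)}}\, \rho^{(y)}\, \sqrt{\delta}^{-1} = \sqrt{D^{(x)}}\, R^{(xy)}\, \rho^{(y)}\, \sqrt{\delta}^{-1}\text{.}
\]
Next, I would apply the $R$-version of Lemma~\ref{lem:wilson_2nd}, which gives $R^{(xy)}\,\rho^{(y)} = \binom{k-x}{y-x}\,\rho^{(x)}$, and pull the scalar $\binom{k-x}{y-x}$ out in front. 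Finally, recognizing $\sqrt{D^{(x)}}\, \rho^{(x)}\, \sqrt{\delta}^{-1}$ as $\omega^{(x)}$ finishes the proof.

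There is no real obstacle here; the only thing to keep in mind is that $\sqrt{D^{(y)}}$ is invertible (which was noted just before the definition of the averaged matrices), so the cancellation in the middle is legitimate. The argument is the natural analogue for designs of Lemma~\ref{lem:W_transitive}, using the $\rho$-identity of Lemma~\ref{lem:wilson_2nd} in place of the transitivity identity of Lemma~\ref{lem:RK_transitive}.
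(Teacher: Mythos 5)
Your proof is correct. It takes a slightly different, though closely parallel, route from the paper's. The paper's own proof invokes Lemma~\ref{lem:W_transitive} with $z = k$ to get $W^{(xy)}\,W^{(yk)} = \binom{k-x}{y-x}\,W^{(xk)}$ and then restricts this identity to the columns labelled by elements of $\mathfrak{B}$, using the observation that $\omega^{(y)}$ is exactly that column restriction of $W^{(yk)}$. You instead unpack the definitions of the averaged matrices, cancel $\sqrt{D^{(y)}}^{-1}\,\sqrt{D^{(y)}}$ in the middle, and apply the $\rho$-identity of Lemma~\ref{lem:wilson_2nd}. In effect the two arguments commute the same two operations: the paper conjugates by the diagonal matrices first (inside Lemma~\ref{lem:W_transitive}) and restricts to block columns second, while you restrict first (inside Lemma~\ref{lem:wilson_2nd}) and conjugate second. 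Both are complete and equally short; your version has the minor advantage of not needing to re-observe that $\omega^{(y)}$ is a column restriction of $W^{(yk)}$, and it mirrors exactly the proof pattern of Theorem~\ref{thm:wilson_omega}, whereas the paper's version reuses the already-established $W$-level identity verbatim.
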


\begin{proof}
	Lemma~\ref{lem:W_transitive} gives $W^{(xy)}\, W^{(yk)} = \binom{k-x}{y-x}\, W^{(xk)}$.
	Now we restrict this equation to the columns belonging to the elements in $\mathfrak{B}$.
\end{proof}

\begin{theorem}\label{thm:wilson_omega}
	Let $V$ be a finite set of size $v$ and let $(\mathfrak{P}_0,\ldots,\mathfrak{P}_v)$ be a tactical sequence of partitions on $V$.
	Let $(V,\des)$ be a non-empty $t$-$(v,k,\lambda)$ design with $t\leq k \leq v-t$, such that the block set has the form $\des = \bigcup\mathfrak{B}$ with $\mathfrak{B} \subseteq \mathfrak{P}_k$.
	Let $e,f$ be non-negative integers with $e + f \leq t$.

	Then
	\[
		\omega^{(e)}\,(\omega^{(f)})^\top = \sum_{j=0}^{\min(e,f)}\lambda_{e + f - j,\, j}\,(W^{(je)})^\top\, W^{(jf)}\text{.}
	\]
\end{theorem}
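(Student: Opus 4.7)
The plan is to reduce Theorem~\ref{thm:wilson_omega} to Theorem~\ref{thm:wilson} by conjugating both sides with appropriate diagonal square-root matrices. The definitions of $\omega^{(x)}$ and $W^{(xy)}$ each come in two equivalent forms, one in terms of $\rho$/$R$ and one in terms of $\kappa$/$K$; the whole proof consists in choosing, at each occurrence, the form that makes the inner $\sqrt{D^{(\cdot)}}$- and $\sqrt{\delta}$-factors telescope cleanly, so that both sides carry exactly the same outer sandwich $\sqrt{D^{(e)}}\,(\cdot)\,\sqrt{D^{(f)}}^{-1}$.

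First I would rewrite the left-hand side. Using $\omega^{(e)} = \sqrt{D^{(e)}}\,\rho^{(e)}\,\sqrt{\delta}^{-1}$ together with $\omega^{(f)} = \sqrt{D^{(f)}}^{-1}\,\kappa^{(f)}\,\sqrt{\delta}$ and the fact that $\sqrt{\delta}$ is symmetric, transposition yields $(\omega^{(f)})^\top = \sqrt{\delta}\,(\kappa^{(f)})^\top\,\sqrt{D^{(f)}}^{-1}$. The central factors $\sqrt{\delta}^{-1}\sqrt{\delta}$ collapse and I obtain
\[
	\omega^{(e)}\,(\omega^{(f)})^\top = \sqrt{D^{(e)}}\,\rho^{(e)}\,(\kappa^{(f)})^\top\,\sqrt{D^{(f)}}^{-1}.
\]

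Next I would treat each summand on the right-hand side in the same way: using $W^{(je)} = \sqrt{D^{(j)}}^{-1}\,K^{(je)}\,\sqrt{D^{(e)}}$ (so that $(W^{(je)})^\top = \sqrt{D^{(e)}}\,(K^{(je)})^\top\,\sqrt{D^{(j)}}^{-1}$) and $W^{(jf)} = \sqrt{D^{(j)}}\,R^{(jf)}\,\sqrt{D^{(f)}}^{-1}$, the inner pair $\sqrt{D^{(j)}}^{-1}\sqrt{D^{(j)}}$ cancels, giving $(W^{(je)})^\top\,W^{(jf)} = \sqrt{D^{(e)}}\,(K^{(je)})^\top R^{(jf)}\,\sqrt{D^{(f)}}^{-1}$. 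Crucially, the outer factors are independent of $j$, so they can be pulled outside the sum.

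Combining the two computations, the identity to be proved is exactly the identity
\[
	\rho^{(e)}\,(\kappa^{(f)})^\top = \sum_{j=0}^{\min(e,f)} \lambda_{e+f-j,\,j}\,(K^{(je)})^\top R^{(jf)},
\]
conjugated by $\sqrt{D^{(e)}}\,(\cdot)\,\sqrt{D^{(f)}}^{-1}$. This is Theorem~\ref{thm:wilson}. There is no genuine obstacle in this argument; the only point that needs a little care is the matching of the two forms of $\omega$ and $W$ so that the inner diagonal factors cancel rather than compound, which is precisely the bookkeeping encoded in Lemma~\ref{lem:alltop} and Lemma~\ref{lem:alltop_design}.
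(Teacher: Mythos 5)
Your proof is correct and follows essentially the same route as the paper: both conjugate the identity of Theorem~\ref{thm:wilson} by $\sqrt{D^{(e)}}\,(\cdot)\,\sqrt{D^{(f)}}^{-1}$ and use the two equivalent forms of $\omega^{(x)}$ and $W^{(xy)}$ so that the inner factors $\sqrt{\delta}^{-1}\sqrt{\delta}$ and $\sqrt{D^{(j)}}^{-1}\sqrt{D^{(j)}}$ telescope. If anything, your write-up is slightly cleaner, since the paper's displayed computation contains a small typo ($(\rho^{(f)})^\top$ where $(\kappa^{(f)})^\top$ is meant) that your version avoids.
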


\begin{proof}
    Left-multiplication by $\sqrt{D^{(e)}}$ and right-multiplication by $\sqrt{D^{(f)}}^{-1}$ of the formula in
    Theorem~\ref{thm:wilson} lead to
    \begin{multline*}
    	\sqrt{D^{(e)}}\,\rho^{(e)}\,\sqrt{\delta}^{-1}\cdot\sqrt{\delta}\,(\rho^{(f)})^\top\, \sqrt{D^{(f)}}^{-1} \\
        = \sum_{j=0}^{\min(e,f)} \lambda_{e + f - j,\, j}\cdot\sqrt{D^{(e)}}\,(K^{(je)})^\top\, \sqrt{D^{(j)}}^{-1}\cdot \sqrt{D^{(j)}}\, R^{(jf)}\,\sqrt{D^{(f)}}^{-1}\text{,}
    \end{multline*}
	which gives the claimed statement.
\end{proof}

Theorem~\ref{thm:wilson_omega} is the literal generalization of \cite[Prop.~1]{Wilson1982}.
It allows us to further follow this famous paper investigating the definiteness of the
symmetric matrix $\omega^{(x)}\,(\omega^{(x)})^\top$.

\begin{lemma}\label{lem:wilson_omega}
	Let $x$ be an integer with $0 \leq 2x \leq t$.
	Then the matrix $\omega^{(x)}\, (\omega^{(x)})^\top$ is positive definite.
\end{lemma}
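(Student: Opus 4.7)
The plan is to apply Theorem~\ref{thm:wilson_omega} with $e = f = x$, which is permitted since $2x \le t$, giving
\[
\omega^{(x)}\,(\omega^{(x)})^\top = \sum_{j=0}^{x} \lambda_{2x-j,\, j}\,(W^{(jx)})^\top\, W^{(jx)}\text{.}
\]
The matrix on the right is a nonnegative linear combination of matrices of the form $A^\top A$, each of which is positive semi-definite, so the sum is at least positive semi-definite. To upgrade this to positive definite, I would isolate the $j = x$ summand and argue that it alone is already positive definite.

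For this last step, I would first note that the coefficients $\lambda_{2x-j,j}$ are all strictly positive: by assumption $2x \leq t$, so the indices satisfy $(2x - j) + j = 2x \leq t$ and are therefore admissible, and the design is non-empty, so $\lambda_{i,j} > 0$ for all admissible $i,j$ as remarked just before the definition of $\rho^{(x)}$. Second, since $R^{(xx)} = I$ (as recorded just after Example~\ref{ex:v6order3}), we have
\[
W^{(xx)} = \sqrt{D^{(x)}}\, I\, \sqrt{D^{(x)}}^{-1} = I\text{,}
\]
so the summand for $j = x$ is $\lambda_{x,x}\, I$, which is positive definite. Adding the remaining positive semi-definite summands preserves positive definiteness, completing the proof.

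The argument is essentially a bookkeeping check; the only substantive ingredient is recognizing that $R^{(xx)} = I$ forces $W^{(xx)} = I$, so that the \enquote{diagonal} term of the Wilson-type decomposition contributes a positive multiple of the identity. No obstacle of any depth is expected, since Theorem~\ref{thm:wilson_omega} has already done the heavy lifting of expressing $\omega^{(x)}(\omega^{(x)})^\top$ as a Gram-type sum with nonnegative coefficients.
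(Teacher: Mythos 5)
Your proof is correct and follows exactly the same route as the paper: apply Theorem~\ref{thm:wilson_omega} with $e=f=x$, observe that each summand is positive semidefinite with strictly positive coefficient $\lambda_{2x-j,j}$ (admissible because $2x\leq t$), and note that the $j=x$ term is $\lambda_{x,x}I$, which makes the whole sum positive definite. Your explicit verification that $W^{(xx)}=\sqrt{D^{(x)}}\,I\,\sqrt{D^{(x)}}^{-1}=I$ is a small extra detail the paper leaves implicit.
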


\begin{proof}
Theorem~\ref{thm:wilson_omega} with $x = e = f$ states that
\[
	\omega^{(x)}\, (\omega^{(x)})^\top
        = \sum_{j=0}^x \lambda_{2x - j,\, j} \,(W^{(jx)})^\top\, W^{(jx)}\text{.}
\]
All the matrices $\lambda_{2x - j, j}\,(W^{(jx)})^\top\, W^{(jx)}$ are positive semidefinite; note that $\lambda_{2x - j,\, j} > 0$ because of $(2x - j) + j = 2x \leq t$.
For $j = x$, the matrix $\lambda_{2x - j, j}\,(W^{(jx)})^\top\, W^{(jx)} = \lambda_{x,x} I_x^\top I_x$ is positive definite.
Now being the sum of a positive definite matrix and positive semidefinite matrices, $\omega^{(x)}\,(\omega^{(x)})^\top$ is positive definite.
\end{proof}

We remark that the condition $2x \leq t$ in Lemma~\ref{lem:wilson_omega} cannot be dropped.
In Example~\ref{ex:v6order3_omega}, the matrix $\omega^{(2)}$ is of size $5\times 4$, such that the $(5\times 5)$-matrix $\omega^{(2)}\, (\omega^{(2)})^\top$ cannot be regular.

\begin{theorem}\label{thm:raychaudhuri_wilson}
	Let $V$ be a finite set of size $v$ and let $(\mathfrak{P}_0,\ldots,\mathfrak{P}_v)$ be a tactical sequence of partitions on $V$.
	Let $(V,\des)$ be a non-empty $t$-$(v,k,\lambda)$ design with $t\leq k \leq v-t$, such that the block set has the form $\des = \bigcup\mathfrak{B}$ with $\mathfrak{B} \subseteq \mathfrak{P}_k$.

	Then $\#\mathfrak{B} \geq \#\mathfrak{P}_{x}$ for all $x\in\{0,\ldots,\lfloor t/2 \rfloor\}$.
\end{theorem}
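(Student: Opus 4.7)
The plan is to read this off directly from Lemma~\ref{lem:wilson_omega} via a standard rank argument. Fix $x\in\{0,\ldots,\lfloor t/2\rfloor\}$, so that $2x \leq t$. By Lemma~\ref{lem:wilson_omega}, the $\#\mathfrak{P}_x \times \#\mathfrak{P}_x$ matrix $\omega^{(x)}\,(\omega^{(x)})^\top$ is positive definite, hence of full rank $\#\mathfrak{P}_x$.

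Next, I invoke the elementary rank inequality $\rk(AA^\top) \leq \rk(A)$ for any real matrix $A$, applied to $A = \omega^{(x)}$, which has size $\#\mathfrak{P}_x \times \#\mathfrak{B}$. This yields
\[
\#\mathfrak{P}_x = \rk\bigl(\omega^{(x)}\,(\omega^{(x)})^\top\bigr) \leq \rk\bigl(\omega^{(x)}\bigr) \leq \#\mathfrak{B}\text{,}
\]
where the last inequality is simply the bound of the rank by the number of columns. This completes the proof.

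There is essentially no obstacle here: all the work has been done in establishing positive definiteness in Lemma~\ref{lem:wilson_omega}, and the conclusion is just the observation that a rectangular matrix with full-rank Gram matrix must have at least as many columns as rows. The one point to double-check is that the hypotheses of Lemma~\ref{lem:wilson_omega} are indeed available (they are, since $x \leq \lfloor t/2\rfloor$ ensures $2x \leq t$, and all remaining hypotheses of the theorem coincide with those already needed for $\omega^{(x)}$ to be defined and for Theorem~\ref{thm:wilson_omega} to apply).
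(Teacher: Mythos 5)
Your proposal is correct and follows exactly the paper's own argument: invoke Lemma~\ref{lem:wilson_omega} to get positive definiteness (hence regularity) of $\omega^{(x)}\,(\omega^{(x)})^\top$, conclude that $\omega^{(x)}$ has full row rank $\#\mathfrak{P}_x$, and bound this by the number of columns $\#\mathfrak{B}$. Nothing to add.
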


\begin{proof}
	Let $x\in\{0,\ldots,\lfloor t/2 \rfloor\}$.
	By Lemma~\ref{lem:wilson_omega}, the $(\#\mathfrak{P}_x\times\#\mathfrak{P}_x)$-matrix $\omega^{(x)}\, (\omega^{(x)})^\top$ is positive definite and hence regular.
	Therefore the rank of the $(\#\mathfrak{P}_x\times\#\mathfrak{B})$-matrix $\omega^{(x)}$ is $\#\mathfrak{P}_x$.
	So $\#\mathfrak{P}_x \leq \#\mathfrak{B}$.
\end{proof}

In the case $\#G = 1$, Theorem~\ref{thm:raychaudhuri_wilson} specializes to the generalized Fisher's inequality \cite[Thm.~1]{RayChaudhuri-Wilson-1975-OsakaJM12[3]:737-744},
see also \cite[Thm.~1]{Wilson1982}, which again generalizes the ordinary Fisher's inequality (which is the case $\#G = 1$ and $t = 2$).
Moreover, in the case $t = 2$ Theorem~\ref{thm:raychaudhuri_wilson} specializes to Block's theorem, see \cite[Cor.~2.2]{Block1967} and \cite[Thm.~4.1]{Kantor1969}.

\section{Generalization to subspace designs}
The definition of a block design only involves terms within the subset lattice of the set $V$ of finite size $v = \#V$.
Replacing this subset lattice (and the derived notions) by the subspace lattice of a $\F_q$-vector space of finite dimension $v$, we get a $q$-analog of this definition.
The resulting object is known as a $t$-$(v,k,\lambda)_q$ \emph{subspace design} $(V,\des)$, which therefore is defined as a set $\des$ of $k$-dimensional subspaces of $V$ such that each $t$-dimensional subspace $T$ of $V$ is contained in exactly $\lambda$ elements of $\des$.
Up to a certain point, the theory of subspace designs closely matches the theory of block designs, where the latter may be understood as the limit case $q = 1$, see \cite{Braun-Kiermaier-Wassermann-2018-SignalsCommunTechnol:171-211}.

We decided to restrict the presentation of Section~\ref{sect:high} to the classical setting of block designs for better readability.
But all the results and proofs are still true for subspace designs, in their natural $q$-analog counterpart.
This means that we can replace the subset lattice by the subspace lattice, implying that the cardinality is replaced by the dimension, binomial coefficients are replaced by Gaussian binomial coefficients, the set union is replaced by the sum of subspaces, the permutation group $S_V$ is replaced by the group $\operatorname{P\Gamma L}(V)$, etc.

It's indicated to add a few words on the counterpart of the numbers $\lambda_{i,j}$.
There are two natural $q$-analogs, which coincide in the case of block designs.
For a $t$-$(v,k,\lambda)_q$ subspace design on a $v$-dimensional $\F_q$-vector space $V$, we fix non-negative integers $i$ and $j$ with $i + j \leq t$ and an $i$-dimensional subspace $I$ of $V$.

In the first variant (which is the one discussed in \cite{Braun-Kiermaier-Wassermann-2018-SignalsCommunTechnol:171-211}), a $(v-j)$-dimensional subspace $J$ with $I \leq J\leq V$ is fixed, and it is shown that the number
\[
    \lambda^{(1)}_{i,j} = \#\{B\in \des \mid I \leq B\leq J\} = \qbinom{v-i-j}{k-i}{q} / \qbinom{v-t}{k-t}{q} \cdot \lambda
\]
does not depend on the choice of $I$ and $J$.
This first variant is the more natural one in the sense that $\lambda^{(1)}_{i,j}$ captures the $\lambda$-values of all the reduced, derived, residual and dual designs of $(V,\des)$ including their iterations and combinations \cite{Kiermaier-Laue-2015-AiMoC9[1]:105-115}.

For a direct counterpart of Lemma~\ref{lem:pascal-formula} we need the second variant \cite{Suzuki1990, Kiermaier-Pavcevic-2015-JCD23[11]:463-480}, though.
Now a $j$-dimensional subspace $J$ of $V$ having trivial intersection with $I$ is fixed, and it is shown that the number
\begin{align*}
    \lambda^{(2)}_{i,j} & = \#\{B\in \des \mid I \leq B\text{ and }J \cap B = \{\boldsymbol{0}\}\}    \\
                        & = q^{j(k-i)} \qbinom{v-i-j}{k-i}{q} / \qbinom{v-t}{k-t}{q} \cdot \lambda \\
                        & = q^{j(k-i)} \lambda^{(1)}_{i,j}
\end{align*}
does not depend on the choice of $I$ and $J$.
For the proof of Lemma~\ref{lem:pascal-formula} we note that the counted set can be rewritten as
\[
    \{B\in \des \mid I \leq B\text{ and }J \cap B = \{\boldsymbol{0}\}\}
    = \{B \in \des \mid B \cap (I+J) = I\}\text{,}
\]
where the inclusion \enquote{$\supseteq$} is clear and the inclusion \enquote{$\subseteq$} follows from the dimension formula
\begin{align*}
     & \!\!\!\!\!\!\!\!\!\!\dim(B \cap (I+J))                               \\
     & = \dim((B\cap (I+J)) \cap J) + \dim((B\cap (I+J)) + J) - \dim(J)     \\
     & \leq \dim(B\cap J) + \dim(I+J) - \dim(J) = 0 + (i+j) - j = i\text{.}
\end{align*}

In the case $e = f = 1$, the subspace design version of Theorem~\ref{thm:wilson} is contained in \cite[Thm.~2]{Nakic-Pavcevic-2015}.
In the case $\#G = 1$, the subspace design version of Theorem~\ref{thm:raychaudhuri_wilson} is contained in \cite[statement~(3')]{Cameron-1974}, see also \cite[Thm.~2.3]{Suzuki1990}.

\section{Algorithmic use}\label{sect:alg}
The practical use of higher tactical decomposition matrices for computer construction has yet to be explored.
We conclude this paper by applying higher tactical decomposition matrices to the small design parameters $3$-$(10,4,1)$, which have also been investigated in \cite[Sec.~2]{Krcadinac2011}.
The corresponding $\lambda_{ij}$-values are displayed in the following triangle.
\[
	\begin{array}{ccccccc}
		& & &  \lambda_{0,0} = 30  \\
		& & \lambda_{1,0} = 12 & & \lambda_{0,1} = 18 \\
		& \lambda_{2,0} = 4 & & \lambda_{1,1} = 8 & & \lambda_{0,2} = 10 \\
		\lambda_{3,0} = 1 & & \lambda_{2,1} = 3 & & \lambda_{1,2} = 5 & & \lambda_{0,3} = 5
	\end{array}
\]
We fix a group $G$ of order $3$ acting on the $10$-element set $V$ with exactly one fixed point.
Now we consider $G$-invariant $3$-$(10, 4, 1)$ designs with exactly three fixed blocks, i.\,e.\ we fix (up to a permutation of the columns)
\[
	\rho^{(0)} = \begin{pmatrix}1&1&1&3&3&3&3&3&3&3&3&3\end{pmatrix}\text{.}
\]
According to \cite{Krcadinac2011}, up to isomorphism there are
eight tactical decomposition matrices $\rho^{(1)}$ fulfilling the
equations (\ref{eq:alltop:old})--(\ref{eq:rho_kappa:old}), thereby corresponding to designs with the reduced parameters $2$-$(10,4,4$).
The extended method of \cite{Krcadinac2011} then shows that exactly one of these eight matrices leads to a $G$-invariant $3$-$(10,4,1)$ design.

As a proof of concept, we test if the restrictions given by Section \ref{sect:high} lead to the same result.
The choice of $V=\{0,\ldots, 9\}$ and
\[
	G=\langle (1\, 2\, 3)\,(4\, 5\, 6)\,(7\, 8\, 9)\rangle
\]
results in the partitions
\begin{align*}
	\mathfrak{P}_1 & = \big\{\{0\},\{1,2,3\},\{4,5,6\},\{7,8,9\}\big\}\text{,} \\
	\mathfrak{P}_2 & = \big\{\{01,02,03\},\{04,05,06\},\{07,08,09\},\{12,13,23\},\{14,25,36\},\\
	& \phantom{{}=\big\{} \{15,26,34\},\{16,24,35\},\{17,28,39\},\{18,29,37\},\{19,27,38\},\\
	& \phantom{{}=\big\{} \{45,46,56\},\{47,58,69\},\{48,59,67\},\{49,57,68\},\{78,79,89\}\big\}
\end{align*}
and the $R$- and $K$-matrices%
\footnote{Again, for the assignment to the columns and rows, the partitions $\mathfrak{P}_x$ are assumed to be ordered as above.}
\begin{align*}
R^{(01)} & = \begin{pmatrix} 1&3&3&3 \end{pmatrix}\text{,} \\
K^{(01)} & = \begin{pmatrix} 1&1&1&1 \end{pmatrix}\text{,} \\
R^{(02)} & = \begin{pmatrix} 3&3&3&3&3&3&3&3&3&3&3&3&3&3&3 \end{pmatrix}\text{,} \\
K^{(02)} & = \begin{pmatrix} 1&1&1&1&1&1&1&1&1&1&1&1&1&1&1 \end{pmatrix}\text{,} \\
R^{(12)} & = \begin{pmatrix}
3&3&3&0&0&0&0&0&0&0&0&0&0&0&0 \\
1&0&0&2&1&1&1&1&1&1&0&0&0&0&0 \\
0&1&0&0&1&1&1&0&0&0&2&1&1&1&0 \\
0&0&1&0&0&0&0&1&1&1&0&1&1&1&2 \\
\end{pmatrix}\text{ and} \\
K^{(12)} & = \begin{pmatrix}
1&1&1&0&0&0&0&0&0&0&0&0&0&0&0 \\
1&0&0&2&1&1&1&1&1&1&0&0&0&0&0 \\
0&1&0&0&1&1&1&0&0&0&2&1&1&1&0 \\
0&0&1&0&0&0&0&1&1&1&0&1&1&1&2 \\
\end{pmatrix}\text{.}
\end{align*}
First, we reproduced the following $8$ representatives for $\rho^{(1)}$.
\allowdisplaybreaks
\begin{align*}
    \rho^{(1)}_1 & = \begin{pmatrix}
        1&1&1&0&3&3&0&0&3&0&0&0\\
        0&0&0&2&1&1&2&1&2&1&2&0\\
        1&0&1&1&1&1&1&2&0&2&1&1\\
        0&1&0&1&1&1&1&1&1&1&1&3\\
    \end{pmatrix}\\
    \rho^{(1)}_2 & = \begin{pmatrix}
        1&1&1&0&3&3&0&0&3&0&0&0\\
        0&0&0&2&1&1&2&1&2&1&2&0\\
        1&0&1&1&1&1&1&1&0&1&2&2\\
        0&1&0&1&1&1&1&2&1&2&0&2\\
    \end{pmatrix}\\
    \rho^{(1)}_3 & = \begin{pmatrix}
        1&1&1&0&3&3&0&0&3&0&0&0\\
        0&0&0&2&1&1&2&1&2&1&2&0\\
        1&0&0&1&1&2&1&1&0&2&1&2\\
        0&1&1&1&1&0&1&2&1&1&1&2\\
    \end{pmatrix}\\
    \rho^{(1)}_4 & = \begin{pmatrix}
        1&1&1&0&3&3&0&0&3&0&0&0\\
        0&0&0&3&1&1&1&1&2&1&1&1\\
        1&0&1&1&1&1&2&1&0&1&1&2\\
        0&1&0&0&1&1&1&2&1&2&2&1\\
    \end{pmatrix}\\
    \rho^{(1)}_5 & = \begin{pmatrix}
        1&1&1&0&3&3&0&0&3&0&0&0\\
        0&0&0&1&2&2&2&1&0&1&1&2\\
        1&0&1&1&0&1&1&2&1&1&2&1\\
        0&1&0&2&1&0&1&1&2&2&1&1\\
    \end{pmatrix}\\
    \rho^{(1)}_6 & = \begin{pmatrix}
        1&1&1&0&3&3&0&0&3&0&0&0\\
        0&0&1&1&2&1&2&1&0&1&1&2\\
        0&1&0&1&1&1&0&2&1&2&2&1\\
        1&0&0&2&0&1&2&1&2&1&1&1\\
    \end{pmatrix}\\
    \rho^{(1)}_7 & = \begin{pmatrix}
        1&1&1&0&3&3&0&0&3&0&0&0\\
        0&0&1&1&2&1&2&1&0&1&1&2\\
        0&1&0&1&1&0&1&2&2&1&2&1\\
        1&0&0&2&0&2&1&1&1&2&1&1\\
    \end{pmatrix}\\
    \rho^{(1)}_8 & = \begin{pmatrix}
        1&1&1&0&3&3&0&0&3&0&0&0\\
        0&0&1&2&1&1&2&1&1&1&2&0\\
        1&0&0&2&1&1&1&1&1&2&0&2\\
        0&1&0&0&1&1&1&2&1&1&2&2\\
    \end{pmatrix}
\end{align*}
For each of these eight matrices $\rho^{(1)}$, we determine all possible matrices $\rho^{(2)}$ as the solutions of the matrix equations
\begin{align*}
    R^{(02)} \,   \rho^{(2)}          & = \binom{k}{2}\, \rho^{(0)}                                      \text{,}\\
    R^{(12)} \,   \rho^{(2)}          & = \binom{k-1}{1}\,\rho^{(1)}                                     \text{,}\\
    \rho^{(2)} \, (\kappa^{(0)})^\top & = \lambda_2\cdot (1,\ldots,1)^\top                               \text{ and}\\
    \rho^{(2)} \, (\kappa^{(1)})^\top & = \sum_{j=0}^{1} \lambda_{3 - j,\, j} \,(K^{(j2)})^\top R^{(j1)}
\end{align*}
from Lemma \ref{lem:wilson_2nd} and Theorem \ref{thm:wilson} which give linear restrictions on the entries of the matrix $\rho^{(2)}$.
For the evaluation, we note that $\kappa^{(0)} = (1\; 1\; 1\; 1)$ and that $\kappa^{(1)}$ is determined by Lemma \ref{lem:alltop} by the given $\rho^{(1)}$.
The right hand side of the last equation equals
\[
    \begin{pmatrix}
    4&4&4&1&1&1&1&1&1&1&1&1&1&1&1 \\
    6&3&3&9&6&6&6&6&6&6&3&3&3&3&3 \\
    3&6&3&3&6&6&6&3&3&3&9&6&6&6&3 \\
    3&3&6&3&3&3&3&6&6&6&3&6&6&6&9 \\
    \end{pmatrix}^\top\text{.}
\]
Solving this system of Diophantine linear equations with the second author's software \emph{solvediophant} \cite{Wassermann2021} shows that -- just as in \cite{Krcadinac2011} -- exactly one of the eight matrices $\rho^{(1)}$ (namely $\rho^{(1)}_8$) can be extended to a higher decomposition matrix $\rho^{(2)}$.
One of the $47\,040$ solutions is
\[
\rho^{(2)} =
\begin{pmatrix}
    0&0&1&0&1&1&0&0&1&0&0&0\\
    1&0&0&0&1&1&0&0&1&0&0&0\\
    0&1&0&0&1&1&0&0&1&0&0&0\\
    0&0&1&1&0&0&1&0&0&0&1&0\\
    0&0&0&2&0&0&0&1&1&0&0&0\\
    0&0&0&1&1&0&1&0&0&1&0&0\\
    0&0&0&1&0&1&1&0&0&1&0&0\\
    0&0&0&0&0&1&1&1&0&0&1&0\\
    0&0&0&0&1&0&1&1&0&0&1&0\\
    0&0&0&0&0&0&0&0&1&1&2&0\\
    1&0&0&1&0&0&0&0&0&1&0&1\\
    0&0&0&0&1&0&0&1&0&1&0&1\\
    0&0&0&0&0&1&0&1&0&1&0&1\\
    0&0&0&0&0&0&1&0&1&0&0&2\\
    0&1&0&0&0&0&0&1&0&0&1&1\\
\end{pmatrix}\text{.}
\]

What remains to do is the so-called \emph{indexing step}, assigning to each column of $\rho^{(2)}$ a suitable element of $\mathfrak{P}_4$.
In this case, it is almost trivial to read off the (unique) design belonging to $\rho^{(2)}$ based on the partitions $\mathfrak{P}_1$ and $\mathfrak{P}_2$.
We give two examples.

The fourth column belongs to a set $\mathcal{B}$ of blocks of size $\#\mathcal{B} = \rho^{(0)}_4 = 3$.
The fifth row of $\rho^{(2)}$ is assigned to the part $\{14,25,36\}$ of $\mathfrak{P}_2$.
So by $\rho^{(2)}_{5,4} = 2$, two blocks in $\mathcal{B}$ contain $\{1,4\}$, two blocks contain $\{2,5\}$ and two blocks contain $\{3,6\}$.
The only possibility is $\mathcal{B} = \{1245,1346,2356\}$.

The fifth column belongs to a set $\mathcal{B}$ of blocks of size $\#\mathcal{B} = \rho^{(0)}_5 = 3$.
By $\rho^{(1)}_{2,5} = 1$, there is a unique block $B\in\mathcal{B}$ containing $1$.
By $\rho^{(1)}_{1,5} = 1$, $0\in B$.
By $\rho^{(2)}_{6,5} = 1$, there is a unique block $B\in\mathcal{B}$ containing $\{1,5\}$.
So $5\in B$.
Similarly, $\rho^{(2)}_{9,5} = 1$ implies $8\in B$.
Hence $B = \{0158\}$ and therefore $\mathcal{B}$ is the $G$-orbit $\{0158,0269,0347\}$.

In this way, we end up with the design given by the partition
\begin{align*}
	\mathfrak{B} & = 
	\big\{
		\{0456\},\{0789\},\{0123\},\\
		& \phantom{{}=\big\{}\{1245,1346,2356\},\{0158,0269,0347\},\{0167,0248,0359\},\\
		& \phantom{{}=\big\{}\{1268,1357,2349\},\{1478,2589,3679\},\{0149,0257,0368\}, \\
		& \phantom{{}=\big\{}\{1569,2467,3458\},\{1279,2378,1389\},\{4579,5678,4689\}
	\big\}\text{.}
\end{align*}

We would like to add a few more remarks.
Note that in general, Lemma~\ref{lem:alltop} will give additional divisibility
conditions on the entries of a matrix $\rho^{(e)}$ depending on the orbit lengths.
However, in this specific example, these are trivial.

The approach to determine the whole matrix $\rho^{(2)}$ by solving a single system of equations
is not yet optimal, since even in this small example there are already
$47\,040$ solutions for $\rho^{(2)}$.
A reduction of isomorphic copies up to permutation of rows and columns still has to be implemented.

It is conceivable that an interleaved row-by-row enumeration strategy of all tactical decomposition matrices
$\rho^{(e)}$, $0\leq e<t$ will allow to counter the combinatorial explosion of solutions by an intermediate
rejection of isomorphic partial tactical decomposition matrices.
In particular, a row-by-row enumeration strategy will be mandatory if the prescribed automorphisms are
point-transitive.

In the ordinary approach only using the decomposition matrices $\rho^{(1)}$ and $\kappa^{(1)}$, the indexing step is usually a non-trivial computational problem.
In our above example the matrix $\rho^{(2)}$ was quite helpful to this end.
Therefore the higher tactical decomposition matrices might also prove useful as an intermediate computational goal for the indexing step.

Finally, we note that for the construction of combinatorial designs or subspace designs with
prescribed automorphisms it is irrelevant if the right hand sides
of equations involving $\rho^{(e)} \, (\kappa^{(f)})^\top$ are determined from Theorem \ref{thm:wilson}
or from Lemma \ref{lem:wilson_lhs}.
But unlike Lemma \ref{lem:wilson_lhs}, the variant in Theorem \ref{thm:wilson} does not require the exact knowledge of the partitions $\mathfrak{P}_i$.
This might be useful if the tactical decomposition does not stem from a prescribed automorphisms, but is determined by prescribing the sizes
of the partition of the blocks of the design (i.\,e.\ $\rho^{(0)}$) together
with a chain of tactical decomposition matrices $R^{(01)},R^{(12)},\ldots,R^{(t-1,t)}$ (see Remark~\ref{rem:R_chain}) such
that each matrix $R^{(i,i+1)}$ has constant row sum equal to $v-i$ (as required by Lemma~\ref{lem:RK_constant_sum}).

\section*{Acknowledgements}
We want to thank the anonymous referee for helpful comments.
The second author wants to thank the organizers of the
\emph{Combinatorial Constructions Workshop} (CCW 2022) in Zagreb
for their hospitality. During this workshop the initial ideas of the paper have been developed.

\sloppy
\printbibliography

\end{document}